\renewcommand\d{\partial}
\renewcommand\a{\alpha}
\renewcommand\b{\beta}
\newcommand\s{\sigma}
\renewcommand\t{\tau}
\newcommand\R{\mathbb R}\newcommand\N{\mathbb N}
\newcommand\C{\mathbb C}
\newcommand\dsp{\displaystyle}
\def\g{\gamma}
\def\t{\tau}
\def\l{\lambda}
\def\e{\varepsilon}
\newcommand\br{\begin{rem}}
\newcommand\er{\end{rem}}
\newcommand\bp{\begin{pmatrix}}
\newcommand\ep{\end{pmatrix}}
\newcommand\be{\begin{equation}}
\newcommand\ee{\end{equation}}
\newcommand\ba{\begin{equation}\begin{aligned}}
\newcommand\ea{\end{aligned}\end{equation}}
\newcommand{\Id}{{\rm Id }}
\newtheorem{defi}{Definition}[section]
\newtheorem{theo}[defi]{Theorem}
\newtheorem{prop}[defi]{Proposition}
\newtheorem{lem}[defi]{Lemma}
\newtheorem{cor}[defi]{Corollary}
\newtheorem{rem}[defi]{Remark}
\def\op{{\rm op} }
\numberwithin{equation}{section}
\begin{document}

\renewcommand{\refname}{References}

\title[Approximations of pseudo-differential flows]{Approximations of pseudo-differential flows}
\author{Benjamin Texier}

\address{Institut de Math\'ematiques de Jussieu-Paris Rive Gauche UMR CNRS 7586, Universit\'e Paris-Diderot}
\email{benjamin.texier@imj-prg.fr}

\thanks{2010 {\it Mathematics Subject classification.} 35S10, 35B35. \\ ${}^{}$ \quad The author acknowledges support from the Project ``Instabilities in Hydrodynamics'' funded by the Mairie de Paris (under the ``Emergence'' program) and the Fondation Sciences Math\'ematiques de Paris, thanks Nicolas Lerner and Kevin Zumbrun for interesting discussions, Hugo Federico and Hui Zhu for their comments on an earlier version of the manuscript, St\'ephane Nonnenmacher for pointing out reference \cite{DS}, and the anonymous referee for valuable comments.}

\date{\today}

\begin{abstract} Given a classical symbol $M$ of order zero, and associated semiclassical operators $\op_\e(M),$ we prove that the flow of $\op_\e(M)$ is well approximated, in time $O(|\ln \e|),$ by a pseudo-differential operator, the symbol of which is the flow $\exp(t M)$ of the symbol $M.$ A similar result holds for non-autonomous equations, associated with time-dependent families of symbols $M(t).$ This result was already used, by  the author and co-authors, to give a stability criterion for high-frequency WKB approximations, and to prove a strong Lax-Mizohata theorem. We give here two further applications: sharp semigroup bounds, implying nonlinear instability under the assumption of spectral instability at the symbolic level, and a new proof of sharp G\r{a}rding inequalities.
\end{abstract}

\maketitle

\section{Introduction}

Consider a family $\op_\e(M)$ of semiclassical pseudo-differential operators associated with a matrix-valued classical symbol $M$ of order zero: that is $M(x,\xi) \in \C^{n \times n },$ for $(x,\xi) \in \R^d \times \R^d,$ satisfying the uniform bounds
\be \label{0} |\d_x^\a \d_\xi^\b M(x,\xi)| \leq C_{\a\b} (1 + |\xi|^2)^{-|\b|/2}, \qquad C_{\a\b} > 0, \,\, \a, \b \in \N^d,\ee
and the associated family of operators defined on the Schwartz class by 
\be \label{def:op}
 (\op_\e(M) u)(x) = \int_{\R^d} e^{i x \cdot \xi} M(x, \e \xi) \hat u(\xi) \, d\xi, \qquad \e > 0.
 \ee
  By the Calder\'on-Vaillancourt theorem, for all $\e > 0,$ $\op_\e(M)$ extends to a linear bounded operator $L^2(\R^d) \to L^2(\R^d).$ Denote $\exp(t \op_\e(M))$ the flow of the ordinary differential equation
\be \label{1} 
 \d_t u = \op_\e(M) u,
\ee
which is known to exist and be global by the Cauchy-Lipschitz theorem, so that $\exp(t \op_\e(M)) u_0$ denotes the unique solution to \eqref{1} with value $u_0 \in L^2(\R^d)$ at $t = 0.$ 

\medskip

We show here that, in time $O(|\ln \e|),$ there holds the approximation
$$ %
 \exp(t \op_\e(M)) \simeq \op_\e(\exp( t M)),
$$ %
made precise in the Approximation Lemma \ref{lem:tildeS} below.

\medskip

More generally, given a bounded family $(M(t))_{t \in \R}$ in the space of symbols of order zero, we show that the solution to the initial value-problem
\be \label{3} 
 \d_t u  = \op_\e(M(t)) u, \qquad u(0)  = u_0 \in L^2(\R^d),\ee
is well approximated, in time $O(|\ln \e|),$ by $\op_\e(S(0;t)) u_0,$ where $S$ is the solution operator for $M(t),$ defined by 
\be \label{4}
\d_t S(\t;t) = M(t) S(\t;t), \qquad S(\t;\t)  \equiv \mbox{Id}.
\ee

 In other words, we approximate solution operators to a class of ordinary differential equations in infinite dimensions (typically, $L^2$) by pseudo-differential operators, the symbols of which are solution operators to ordinary differential equations in {\it finite} dimensions (typically, $\C^{n \times n }$).
 
 This reduction to finite dimensions has applications in particular to {\it stability} problems. Indeed, spectra of variable-coefficient (pseudo)-differential operators are typically difficult to describe, while the spectra of their symbols, being spectra of families of matrices, are at least theoretically computable. Indeed, the Approximation Lemma was already used by the author and co-authors:

\smallskip

$\bullet$\, in \cite{em4}, we proved that for large-amplitude high-frequency WKB solutions to semilinear hyperbolic systems, stability is generically equivalent to preservation of hyperbolicity around resonant frequencies. The verification of this stability criterion involves only computation of spectra and eigenprojectors in finite dimensions. This result applies in particular to instabilities in coupled Klein-Gordon systems and to the Raman and Brillouin instabilities. 

\smallskip

$\bullet$\, In \cite{LNT}, we proved a strong Lax-Mizohata theorem stating that even a weak defect of hyperbolicity implies ill-posedness for systems of first-order partial differential equations, extending work of M\'etivier \cite{Me}. 

\medskip

We give here two further applications:

\smallskip

$\bullet$\, in Theorem \ref{th:spectral}, Section \ref{sec:ex}, sharp lower and upper bounds are proved for the solution operator to \eqref{1}; in line with the above comment following equation \eqref{4}, we note that we dispense here with any consideration of infinite-dimensional spectra of linear (pseudo)-differential operator, and derive growth estimates based solely on consideration of spectra of matrices (symbols). In Section \ref{sec:spmap}, we observe that the bounds of Theorem \ref{th:spectral} are typically sharper than bounds derived from G\r{a}rding's inequality, and in Section \ref{sec:insta} we use Theorem \ref{th:spectral} to prove a nonlinear instability result for an ordinary differential equation in Sobolev spaces. 

\smallskip

$\bullet$\, In Section \ref{sec:Fef-pho}, we give a new proof of sharp G\r{a}rding inequalities with gain of $\theta$ derivatives, for $0 < \theta < 1,$ based on the Approximation Lemma \ref{lem:tildeS}. This somehow completes the comparison, initiated in Section \ref{sec:ex}, of Lemma \ref{lem:tildeS} with G\r{a}rding's inequality.

\medskip

We conclude this introduction with three remarks:

\smallskip

$\bullet$\, {\it the assumption that $M$ be order zero is crucial for our purposes.} Indeed, for the exponential $e^M$ of a classical symbol $M$ to be itself a symbol, we typically need $M \in S^0.$  We can, however, do without the semiclassical quantization in \eqref{1} and \eqref{3}. Indeed, in Section \ref{sec:Fef-pho}, we prove an Approximation Lemma for symbols in Weyl quantization; powers of $\e$ are there replaced with gains in the orders of the operators.

\smallskip

$\bullet$\, {\it There can be found in the literature a number of results describing solution operators to pseudo-differential equations in terms of pseudo-differential operators;} for instance Th\'eor\`eme 6.4 in \cite{BC} and Lemma 8.5 in \cite{Z}, both based on Beals's lemma characterizing pseudo-differential operators (Proposition 8.3 in \cite{DS}). Thus the novelty here is not the description of solution operators as pseudo-differential operators (although we could not find in the literature statements equivalent to Lemma \ref{lem:tildeS} and Theorem \ref{th:duh}), but rather the use we make of this description, in Sections \ref{sec:ex} and \ref{sec:Fef-pho}, in the case of real symbols, or symbols with spectra that are not purely imaginary.   

\smallskip

$\bullet$\, {\it The time $O(|\ln \e|)$ is small compared to standard observation times in the semiclassical limit.} The semiclassical limit is concerned with operators $\e \d_s - A_\e,$ where $\e$ is the semiclassical parameter, and, for instance, $A_\e = i (\e^2 \Delta - V),$ for some potential $V(x)$ (see for instance \cite{BR,Z}). Rescaling the time, $t = s/\e,$ and applying a frequency truncation operator $\chi(\e D),$ with $\chi \in C^\infty_c(\R^d_\xi),$ we find operator $\d_t - \chi(\e D) A_\e,$ to which the Approximation Lemma \ref{lem:tildeS} applies, without any assumption on the potential $V,$ but only in time $O(|\ln \e|)$ in the fast variable $t,$ corresponding to small time $O(\e |\ln \e|)$ in the original temporal variable.
 \section{The approximation Lemma} \label{sec:Duhamel}

 Let $M(t)$ be a bounded family in $S^0,$ meaning a family of smooth maps $(t,x,\xi) \in \R \times \R^d \times \R^d \to M(t,x,\xi) \in \C^{n \times n },$ such that the bounds \eqref{0} hold uniformly in $(t,x,\xi).$ Consider the associated ordinary differential equations
 \begin{equation} \label{buff0} \left\{
  \begin{aligned} \d_t u & = \op_\e(M) u + f, \\ u(0) & = u_0.\end{aligned}\right.  
  \end{equation}
 where $\op_\e(M)$ is defined in \eqref{def:op}. In \eqref{buff0}, the datum $u_0$ belongs to $H^s,$ and the source $f$ is given in $C^0([0, T |\ln\e|], H^s(\R^d)),$ for some $s \in \R,$ some $T > 0.$ 

Let $S(\t;t)$ be the (finite-dimensional) solution operator associated with $M(t),$ that is the family of solutions to the ordinary differential equations in $\C^{n \times n }:$ 
 \be \label{def:S} 
 \d_t S(\t;t) = M(t) S(\t;t), \qquad S(\t;\t) \equiv \Id.
 \ee
 By how much does $\op_\e(S(0;t))$ fail to be the operator solution to \eqref{buff0}? By composition of operators in semiclassical quantization, there holds
 \be \label{error0} \op_\e(\d_t S) = \op_\e(M S) = \op_\e(M) \op_\e(S) - \e \underbrace{\op_\e(M \sharp S) - \e^2 (\dots)}_{\footnotesize{\mbox{error term}}},\ee
 where $\sharp$ denotes the bilinear map $\dsp{\s_1 \sharp \s_2 := \sum_{|\a| = 1} - i \d_\xi^\a \s_1 \d_x^\a \s_2.}$
 Classical results on pseudo-differential operators are recalled in the Appendix (Section \ref{sec:symbols}); in particular a precise estimate for the error in \eqref{error0} is given in \eqref{compo:e}-\eqref{composition:e}.
 
 We see in \eqref{error0} that the leading term in the error is presumably $\e \op_\e(M \sharp S),$ which, in times $O(|\ln \e|),$ may be catastrophically large. 
 
 Indeed, there holds, by Gronwall's lemma, the bound 
 \be \label{basic:S} |S(\t;t)| \leq e^{\g (t - \t)}, \quad \mbox{where $\g = | M |_{L^\infty(t,x,\xi)}.$}\ee
 In the autonomous case $M = M(x,\xi),$ then $S(\t;t) = \exp((t - \t) M(x,\xi)),$ and there holds the more precise bound 
 \be \label{basic:S:aut} |S(\t;t)| \leq C(|M|_{L^\infty}) (1 + t)^{n-1} e^{\g (t - \t)}, \quad \mbox{where $\g = \sup_{x,\xi} \Re e \, \s(M(x,\xi)),$}\ee
 for some $C(|M|_{L^\infty}) > 0,$ where $\s(M(x,\xi))$ denotes the symbol of the matrix $M(x,\xi).$ 
 
 In the following we denote $\g$ both growth rates in \eqref{basic:S} and \eqref{basic:S:aut}; if $M$ is not specified to be time-independent, then \eqref{basic:S} applies. 

  This implies, via the representation
$$ %
 \d_x^\a \d_\xi^\b  S(\t;t) = \int_\t^t S(t';t) [ \d_x^\a \d_\xi^\b , M(t')] S(\t;t') \, dt',
$$ %
where $[A,B] = A B - B A$ (commutator), the bounds 
 \be \label{bd:S0}
  \langle \xi \rangle^{|\b|} |\d_{x}^\a \d_\xi^\b S(\t;t)| \lesssim |\ln \e|^{*} e^{\g (t - \t)}, \quad \mbox{for $\t \leq t,$}
  \ee
 where $|\ln \e|^* = |\ln \e|^{N^*},$ for some $N^* = N^*(\a,\b,n) \in \N,$ and $\lesssim$ means inequality up to a multiplicative constant, depending on $\a,$ $\b,$ $M$ and $T$ but {\it not} on $(\e,\t,t).$ Thus there holds $\e |M \sharp S| \lesssim \e |\ln \e|^* e^{\g t},$ and the upper bound is very large in time $O(|\ln \e|),$ in spite of the $\e$ prefactor. %

 We then introduce a first-order corrector $S_1,$ defined by 
 $$ \d_t S_1 = M S_1 + M \sharp S, \qquad S_1(\t;\t) = 0,$$
 so that
$$ %
 S_1(\t;t) = \int_\t^t S(s;t) M(s) \sharp S(\t;s) \, ds.
 $$ %
In particular, $S_1 \in S^{-1},$ and in time $O(|\ln \e|)$ the corrector $S_1$ and its derivatives are growing at most at exponential rate $\g,$ no faster than $S,$ up to a prefactor of the form $|\ln \e|^*,$ precisely:
\be \label{bd:S1}
  \langle \xi \rangle^{1 + |\b|} |\d_{x}^\a \d_\xi^\b  S_1| \lesssim |\ln \e|^{*} e^{\g (t - \t)}.
  \ee
  The symbol $S_0 + \e S_1$ is a candidate for a better approximation of the symbol of the solution operator, in that it satisfies
 \be \label{eq:s1} \d_t \op_\e(S_0 + \e S_1) = \op_\e(M) \op_\e(S_0 + \e S_1) - \e^2 (\dots).\ee
In the above error $O(\e^2),$ the leading term involves symbols like $M \sharp S_1,$ which is not growing faster than $S.$ Thus the error in \eqref{eq:s1} is truly smaller than the error in \eqref{error0}: the net gain is a power of $\e,$ modulo possibly large, and essentially irrelevant, powers of $|\ln \e|.$

  Iterating this procedure, we define $(S_q)_{1 \leq q \leq q_0},$ for $q_0 := [\g T] + 1,$ as the solution to the triangular system of linear ordinary differential equations
 \begin{equation} \label{def-Sk}
  \d_t S_q = M S_q + \sum_{\begin{smallmatrix} q_1 + q_2 = q \\ 0 <  q_1 \end{smallmatrix}} M \sharp_{q_1} S_{q_2},  \quad S_q(\t;\t) = 0, \quad 1 \leq q \leq q_0 = [\g T] + 1,
  \end{equation} 
where the bilinear map $\sharp_q$ is defined by 
$$ %
 a_1 \sharp_q a_2 := \sum_{|\a| = q} \frac{(-i)^{|\a|}}{|\a|!} \d_\xi^\a a_1 \d_x^\a a_2.
$$ %
From \eqref{def-Sk}, we see that $S_q$ satisfies bounds 
\be \label{bd:Sq} \langle \xi \rangle^{q + |\b|} |\d_x^\a \d_\xi^\b S_q(\t;t)| \lesssim |\ln \e|^* e^{\g (t - \t)}.
\ee The approximate solution operator is defined as
 \begin{equation} \label{def-S}
  \Sigma = S + \sum_{1 \leq q \leq q_0} \e^q S_q.
   \end{equation} 

 \begin{lem}[Approximation Lemma] \label{lem:tildeS} Given $T > 0,$ given a bounded family $M = M(t)$ in $S^0,$ as defined in the first lines of this Section, the operator $\op_\e(\Sigma(\t;t))$ defined in \eqref{def-S} is an approximate solution operator over $[0, T |\ln \e|]$ for the differential equation \eqref{buff0}, in that it satisfies%
  \begin{equation} \label{tildeS} \d_t \op_\e(\Sigma) = \op_\e(M) \op_\e(\Sigma) + \e \op_\e(\rho),\end{equation}
  with $\rho$ such that, for all $u \in H^{s-q_0 - 1}(\R^d),$ 
  \begin{equation} \label{tilde-remainder} \|  \op_\e(\rho(\t;t)) u \|_{\e,s} \lesssim |\ln \e|^{*} \| u\|_{\e, s-q_0 - 1},\end{equation}
  uniformly in $0 \leq \t \leq t \leq T |\ln \e|.$ 
  \end{lem}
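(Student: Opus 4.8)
The plan is to differentiate $\op_\e(\Sigma)$ term by term, substitute the defining equations \eqref{def-Sk} for the $S_q$, and use the exact composition formula for semiclassical operators (recalled in the Appendix, cf. \eqref{compo:e}-\eqref{composition:e}) to match $\op_\e(M)\op_\e(\Sigma)$ up to an explicit remainder. Concretely, from \eqref{def-S} and \eqref{def-Sk},
\be \label{pf:dtSigma}
 \d_t \op_\e(\Sigma) = \op_\e(M S) + \sum_{1 \leq q \leq q_0} \e^q \op_\e\!\Big( M S_q + \sum_{\begin{smallmatrix} q_1 + q_2 = q \\ 0 < q_1 \end{smallmatrix}} M \sharp_{q_1} S_{q_2} \Big).
\ee
On the other hand, the composition formula gives, for each $0 \leq q \leq q_0$,
\be \label{pf:compo}
 \op_\e(M)\op_\e(S_q) = \sum_{j=0}^{q_0 - q} \e^j \op_\e(M \sharp_j S_q) + \e^{q_0 - q + 1} \op_\e(r_q),
\ee
where $\sharp_0$ is ordinary matrix multiplication and $r_q$ is the integral remainder from the Appendix. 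Multiplying \eqref{pf:compo} by $\e^q$, summing over $q$, and collecting the coefficient of each power $\e^q$ with $0 \leq q \leq q_0$, one checks that the sum $\sum_q \e^q \big( M \sharp_0 S_q + \sum_{q_1 + q_2 = q, q_1 > 0} M \sharp_{q_1} S_{q_2}\big)$ is exactly the right-hand side of \eqref{pf:dtSigma}. Hence all terms match except those of order $\e^{q_0+1}$ and higher: these constitute $\e\op_\e(\rho)$, where $\e^{q_0}\rho$ gathers (i) the tails $\e^{q_0-q+1}r_q$ from \eqref{pf:compo}, and (ii) the ``overflow'' terms $\e^q M\sharp_{q_1}S_{q_2}$ with $q > q_0$ coming from expanding the compositions beyond order $q_0$. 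So the identity \eqref{tildeS} holds with $\rho$ a finite sum of such contributions.

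It remains to prove the remainder bound \eqref{tilde-remainder}. For the overflow terms (ii), these are genuine symbols of order $-q_1 - q_2 \leq -(q_0+1)$ (using that $M \in S^0$ and $S_{q_2} \in S^{-q_2}$), with derivatives bounded as in \eqref{bd:Sq} by $|\ln\e|^* e^{\g(t-\t)}$, uniformly for $\t \leq t \leq T|\ln\e|$; since $\g T < q_0$ by the choice $q_0 = [\g T]+1$, the exponential factor $e^{\g(t-\t)} \leq e^{\g T |\ln\e|} = \e^{-\g T}$ is beaten by the prefactor $\e^{q_0 - q_0} \cdot \e^{q-q_0}$ only after one accounts for the powers of $\e$; in fact the relevant observation is that each overflow term carries $\e^{q}$ with $q \geq q_0+1$, so $\e^{q-q_0} e^{\g(t-\t)} \le \e \cdot \e^{-\g T} \to 0$... more carefully, the net factor is $\e^{q - q_0} e^{\g(t-\t)}$ and since $q - q_0 \geq 1 > \g T$ when $T$ is taken so that $q_0 = [\g T]+1 > \g T$, we get a bound $\lesssim |\ln\e|^{N^*}\|u\|_{\e, -q_0-1}$ by Calder\'on--Vaillancourt applied in the appropriate semiclassical Sobolev scale. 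For the composition tails (i), one invokes the explicit remainder estimate from \eqref{compo:e}-\eqref{composition:e}, which controls $\op_\e(r_q)$ in terms of finitely many seminorms of $M$ and $S_q$; again the $S_q$-seminorms contribute $|\ln\e|^* e^{\g(t-\t)}$ and the power $\e^{q_0-q+1}$ (times the pre-existing $\e^q$) yields $\e^{q_0+1}$, overwhelming $e^{\g(t-\t)} \le \e^{-\g T}$ since $q_0 + 1 > \g T$. Gathering the logarithmic powers over the finitely many terms gives a single exponent $N^* = N^*(q_0,N,d)$.

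The main obstacle is bookkeeping rather than any deep estimate: one must track exactly which terms in the expansion of $\op_\e(M)\op_\e(\Sigma)$ are cancelled by the triangular system \eqref{def-Sk} and verify that the surviving terms all carry at least the power $\e^{q_0+1}$, so that the factor $\e^{q_0+1-\g T}$ (with $q_0 + 1 - \g T > 0$) dominates the worst-case growth $e^{\g(t-\t)} \leq \e^{-\g T}$ uniformly on $0 \leq \t \leq t \leq T|\ln\e|$. A secondary point of care is that the Calder\'on--Vaillancourt constant for $\op_\e(r_q)$ must be taken in the correct semiclassical norm $\|\cdot\|_{\e,-q_0-1}$, consistent with the loss of $q_0+1$ derivatives recorded in \eqref{tilde-remainder}; this is where the hypothesis $M \in S^0$ (so that all $S_q$ and hence $\rho$ are honest symbols of nonpositive order with the bounds \eqref{bd:Sq}) is essential.
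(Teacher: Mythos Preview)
Your approach is the same as the paper's: differentiate $\Sigma$ using \eqref{def-Sk}, expand each composition $\op_\e(M)\op_\e(S_q)$ to order $q_0-q$, and check that all symbol terms of total $\e$-power $\leq q_0$ cancel exactly, leaving only the composition remainders $r_q$ at order $\e^{q_0+1}$. The final bound then follows from \eqref{composition:e}, \eqref{bd:Sq}, and the inequality $\e^{q_0}e^{\g(t-\t)}\leq 1$ on $[0,T|\ln\e|]$, which is precisely what $q_0=[\g T]+1$ guarantees.

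One point of confusion to clean up: with the truncation you wrote in \eqref{pf:compo}, there are \emph{no} ``overflow'' terms of type (ii). After multiplying \eqref{pf:compo} by $\e^q$ and summing over $0\leq q\leq q_0$, every symbol term $\e^{q+j}\op_\e(M\sharp_j S_q)$ has $q+j\leq q_0$, and the reindexing $p=q+j$ shows these are exactly the terms appearing in \eqref{pf:dtSigma}. So the remainder $\rho$ consists solely of the tails (i), namely $\e\op_\e(\rho)=-\e^{q_0+1}\sum_{q=0}^{q_0}\op_\e(r_q)$. Your paragraph on the overflow terms (with the stray claim ``$q-q_0\geq 1>\g T$'') can simply be deleted; it is attempting to bound terms that do not occur. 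Once that is removed, the argument is correct and matches the paper's.
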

  
  Above, $\|\cdot\|_{\e,s}$ denotes the semiclassical Sobolev norm $\| u \|_{\e,s} := \big| (1 + |\e \xi|^2)^{s/2} \hat u|_{L^2(\R^d_\xi)}.$ 
  
  \smallskip
  
  The index $s \in \R$ is arbitrary, equal to the assumed regularity of both the source $f$ in \eqref{buff0} and the datum $u_0.$ 

\smallskip
  
  The Taylor index $q_0$ is defined by $q_0 = [\g T] + 1,$ where $\g = |M|_{L^\infty(t,x,\xi)}.$ As noted in \eqref{basic:S:aut}, in the autonomous case we may use $\g = \sup_{x,\xi} \Re e \, \s(M(x,\xi)).$  

\smallskip
  
   Also, in \eqref{tilde-remainder} the symbol $\lesssim$ denotes upper bound up to a multiplicative constant (which does not depend on $\e, \t, t$), and $|\ln \e|^*$ means $|\ln \e|^{N^*},$ for some $N^* > 0$ possibly dependent on all parameters, in particular on dimensions $d, n,$ on the symbol $M,$ on $q_0,$ but not on $\e, \t, t.$   %

 \begin{proof} By composition of operators (see \eqref{compo:e}-\eqref{composition:e}-\eqref{continuite:s}), there holds for $q \geq 0,$ denoting $S_0 : =S,$  
 \be \label{compo:crux} \e^q \op_\e(M) \op_\e(S_q) = \e^q \op_\e(M S_q) + \sum_{ 1 \leq q_1 \leq q_0 - q} \e^{q + q_1} \op_\e( M\sharp_{q_1}   S_q ) + \e^{q_0 + 1} \op_\e(\rho_q),\ee
 where $\rho_q = R_{q_0 - q + 1}(M, S_q),$ using notation introduced in \eqref{compo:e},  satisfies the bound
 \be \label{bd:rhoq} \e^{q_0 + 1} \| \op_\e(\rho_q(\t;t)) u \|_{\e,s} \lesssim \e^{q_0 + 1} |\ln \e|^* e^{\g (t - \t)} \| u \|_{\e,s-q_0 -1}, \ee %
  for all $u \in H^{-q_0 -1},$ uniformly in $0 \leq \t \leq t \leq T |\ln \e|.$ %
  Let $\rho := - \sum_{0 \leq q \leq q_0} \rho_q.$ Summing \eqref{compo:crux} over $q,$ we obtain 
 \be \label{forApp} \op_\e(M) \op_\e(\Sigma) = \op_\e(M \Sigma) + \sum_{\begin{smallmatrix} 0 \leq q \leq q_0 \\ 1 \leq q_1 \leq q_0 - q \end{smallmatrix}}  \e^{q_1} \op_\e( M \sharp_{q_1} S_q ) - \e \op_\e(\rho).\ee
 Besides, by definition of the correctors \eqref{def-Sk}, there holds
 $$ \d_t \op_\e(\Sigma) = \op_\e(M \Sigma) + \sum_{\begin{smallmatrix} 0 \leq q_1 + q_2 \leq q_0 \\ 0 < q_1 \end{smallmatrix}} \e^{q_1 + q_2} \op_\e(M \sharp_{q_1} S_{q_2}),$$
 and comparing with \eqref{forApp} we obtain identity \eqref{tildeS}. The remainder $\rho$ satisfies \eqref{tilde-remainder}, simply by summation of bounds \eqref{bd:rhoq}, since by choice of $q_0$ there holds $\e^{q_0 + 1} e^{\g (t - \tau)} \leq \e.$
  \end{proof}

 The Approximation Lemma \ref{lem:tildeS} leads to a representation theorem for \eqref{buff0}:

 \begin{theo} \label{th:duh} Given $T > 0,$ given a bounded family $M = M(t)$ in $S^0,$ for $\e$ small enough, the unique solution $u \in C^0([0,T |\ln \e|], H^s(\R^d))$ to the initial-value problem \eqref{buff0} is given by 
  \begin{equation} \label{buff0.0} u = \op_\e(\Sigma(0;t)) u_0 + \int_0^t \op_\e(\Sigma(t';t))(\Id + \e R)\Big( f - \e \op_\e(\rho(0;\cdot)) u_0\Big)(t') \, dt',
  \end{equation}
  where $\Sigma$ is defined in \eqref{def-S}, $\rho$ is the remainder in the Approximation Lemma {\rm \ref{lem:tildeS}}, and $R $ is linear bounded $C^0([0,T |\ln \e|], H^{s-q_0-1}(\R^d)) \to C^0([0,T |\ln \e|, H^s(\R^d)),$ with bound   
  \begin{equation} \label{bd-R12} \begin{aligned} 
   \sup_{0 \leq t \leq T |\ln \e|} \| (R w)(t) \|_{\e,s} & \lesssim |\ln \e|^{*} \sup_{0 \leq t \leq T |\ln \e|} \| w(t) \|_{\e,s-q_0 -1}, \end{aligned}
  \end{equation}
 for all $w \in C^0([0, T |\ln \e|], H^{s-q_0-1}(\R^d)).$%
 \end{theo}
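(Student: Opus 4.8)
The plan is to run a standard Duhamel/perturbation argument, using the Approximation Lemma \ref{lem:tildeS} to identify $\op_\e(\Sigma(0;t))$ as an \emph{approximate} solution operator whose defect is the small (in the sense of \eqref{tilde-remainder}) term $\e\op_\e(\rho)$, and then inverting the resulting $\Id + \e(\dots)$ by Neumann series for $\e$ small. First I would set $v(t) := u(t) - \op_\e(\Sigma(0;t))u_0$, where $u$ is the (globally defined, by Cauchy--Lipschitz and Calder\'on--Vaillancourt) solution to \eqref{buff0}. Differentiating and using \eqref{buff0} together with \eqref{tildeS} (applied with $\t = 0$), one gets
\begin{equation*}
\d_t v = \op_\e(M) v + f - \e\,\op_\e(\rho(0;t))u_0, \qquad v(0) = 0,
\end{equation*}
since the $\op_\e(M)\op_\e(\Sigma(0;t))u_0$ terms cancel. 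This is again an equation of the form \eqref{buff0}, now with zero data and source $g := f - \e\op_\e(\rho(0;\cdot))u_0$.

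Next I would iterate the same idea: solve the \emph{inhomogeneous} problem with zero data by the approximate solution operator built from $\Sigma$. For that one needs a Duhamel-type statement saying that if $w$ solves $\d_t w = \op_\e(M)w + g$, $w(0)=0$, then
\begin{equation*}
w(t) = \int_0^t \op_\e(\Sigma(t';t))\,g(t')\,dt' + (\text{error}),
\end{equation*}
with the error controlled by $\e$ times the appropriate norm of $g$. This follows by writing, for fixed $t$, $z(t') := \op_\e(\Sigma(t';t))\big(w(t') - \int_{t'}^{t}\cdots\big)$-type combinations, or more transparently: set $W(t) := \int_0^t \op_\e(\Sigma(t';t)) g(t')\,dt'$, differentiate in $t$ using the Leibniz rule (the $t'=t$ boundary term contributes $\op_\e(\Sigma(t;t))g(t) = g(t)$ since $\Sigma(t;t) = \Id$, and the $\d_t$ inside the integral produces $\op_\e(M)\op_\e(\Sigma(t';t)) - \e\op_\e(\rho(t';t))$ by \eqref{tildeS}), to find $\d_t W = \op_\e(M) W + g - \e\int_0^t \op_\e(\rho(t';t))g(t')\,dt'$, $W(0)=0$. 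Comparing with the equation for $w$ and applying the elementary Gronwall bound $|\op_\e(\exp(t\op_\e(M)))|_{L^2\to L^2}\lesssim e^{C|\op_\e(M)|\,t}$ on the difference $w - W$ over $t\in[0,T|\ln\e|]$, one sees $w - W$ equals an operator applied to $\e\int_0^t\op_\e(\rho)g$, hence $w = W + \e\,(\text{bounded operator})\,(g)$ where the operator maps $C^0H^{-q_0-1}$ to $C^0L^2$ with norm $\lesssim |\ln\e|^{N^*}$ by \eqref{tilde-remainder} (using that over a time interval of length $O(|\ln\e|)$ the flow of $\op_\e(M)$ is bounded by a fixed power of $|\ln\e|$ — this is exactly the kind of bound implicit in the $\e^{q_0+1}e^{\g(t-\t)}\le\e$ normalization). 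Assembling the pieces, $u = \op_\e(\Sigma(0;t))u_0 + W + \e R_0 (g)$ with $g = f - \e\op_\e(\rho(0;\cdot))u_0$; absorbing $\e R_0$ into the $(\Id + \e R)$ factor and replacing $g$ by $f - \e\op_\e(\rho(0;\cdot))u_0$ inside the integral gives \eqref{buff0.0}, with $R$ built from $R_0$ and satisfying \eqref{bd-R12}.

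The one subtle point, and the main obstacle, is \textbf{bookkeeping the $|\ln\e|$ powers against the time interval $[0,T|\ln\e|]$}: every application of Gronwall over this interval to the flow of $\op_\e(M)$ costs a factor $e^{C|\op_\e(M)|\cdot T|\ln\e|}$, which is a \emph{power} of $\e^{-1}$, not a power of $|\ln\e|$. This is fine only because the remainder $\e\op_\e(\rho)$ already carries, through the Approximation Lemma's construction with $q_0 = [\g T]+1$, enough powers of $\e$ (namely the estimate \eqref{bd:rhoq} with the $\e^{q_0+1}e^{\g(t-\t)}\le\e$ cancellation) to beat this growth, leaving only the harmless $|\ln\e|^{N^*}$ in \eqref{tilde-remainder} and \eqref{bd-R12}. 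So the proof must be organized so that Gronwall is applied to the \emph{difference} $w - W$ (whose source is the already-$\e$-small remainder) rather than to $w$ itself, and one must track that the bounded operator $R$ inherits the operator norm $\lesssim |\ln\e|^{N^*}$ uniformly on $[0,T|\ln\e|]$ — which forces the loss of $q_0+1$ derivatives in the semiclassical scale (hence the $H^{-q_0-1}$ hypothesis). Everything else — existence and uniqueness of $u$ in $C^0L^2$, boundedness of all the $\op_\e(\cdot)$ involved via Calder\'on--Vaillancourt applied to the symbol bounds \eqref{bd:S0}, \eqref{bd:S1}, \eqref{bd:Sq} — is routine and can be cited from the Appendix.
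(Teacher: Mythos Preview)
Your argument has a genuine gap at the Gronwall step. You control the difference $w-W$ by writing
\[
(w-W)(t)=\int_0^t \exp\big((t-s)\op_\e(M)\big)\Big(\e\int_0^s\op_\e(\rho(t';s))g(t')\,dt'\Big)ds,
\]
and then invoke the crude bound $|\exp(t\op_\e(M))|_{L^2\to L^2}\le e^{t|\op_\e(M)|}$. Over $[0,T|\ln\e|]$ this produces a factor $\e^{-CT}$ with $C=|\op_\e(M)|_{L^2\to L^2}$, which is a genuine negative power of $\e$, not a power of $|\ln\e|$. You correctly flag this as the subtle point, but your resolution is mistaken: the inequality $\e^{q_0+1}e^{\g(t-\t)}\le\e$ was already consumed in the proof of the Approximation Lemma to arrive at \eqref{tilde-remainder}; after that step, $\e\op_\e(\rho)$ carries exactly one power of $\e$ and nothing more. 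There are no hidden extra $\e$'s in $\rho$ to compensate the $\e^{-CT}$ coming from the exact flow, and in general $CT$ (with $C$ controlled by $\|M\|_0$, not by $\g$) can be arbitrarily large. Consequently your operator $R_0$ does not satisfy a bound of the form \eqref{bd-R12}.

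The paper's proof sidesteps this entirely by never invoking the exact flow $\exp(t\op_\e(M))$. Instead it makes the ansatz
\[
u=\op_\e(\Sigma(0;t))u_0+\int_0^t\op_\e(\Sigma(t';t))g(t')\,dt'
\]
for an \emph{unknown} $g$, and computes (via \eqref{tildeS}) that this solves \eqref{buff0} iff $(\Id+\rho_0)g=f-\e\op_\e(\rho(0;\cdot))u_0$, where $(\rho_0 v)(t)=\e\int_0^t\op_\e(\rho(t';t))v(t')\,dt'$. The operator $\rho_0$ involves only $\op_\e(\rho)$, whose norm is $\lesssim|\ln\e|^{N^*}$ by \eqref{tilde-remainder}; hence $\|\rho_0\|\lesssim\e|\ln\e|^{1+N^*}$ uniformly on $[0,T|\ln\e|]$, and $\Id+\rho_0$ is invertible by Neumann series for $\e$ small. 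Setting $\e R:=(\Id+\rho_0)^{-1}-\Id$ gives \eqref{buff0.0} and \eqref{bd-R12} directly. Your approach can be repaired by iterating the $W$-construction (apply $\op_\e(\Sigma)$ to the residual source, peel off a new remainder, repeat) instead of appealing to Gronwall; the resulting series is precisely this Neumann inversion.
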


In \eqref{bd-R12}, we use the convention for $\lesssim$ and $|\ln \e|^*$ introduced just below Lemma \ref{lem:tildeS}.

\begin{proof} Let $g \in C^0([0,T |\ln \e|], H^s(\R^d)).$ By Lemma \ref{lem:tildeS}, the map $u$ defined by 
 $$ %
 u := \op_\e( \Sigma(0;t)) u_0 + \int_0^t \op_\e( \Sigma(t';t)) g(t') \, dt'
 $$ %
   solves \eqref{buff0} if and only if, for all $t,$ 
\begin{equation} \label{cond-g}
  \big((\Id + \rho_0) g\big)(t)  = f(t) - \e \op_\e(\rho(0;t)) u_0,
   \end{equation}
 where $\rho$ is the remainder introduced in Lemma \ref{lem:tildeS} and $\rho_0$ is the linear integral operator 
  $$\rho_0: \qquad v \in C^0([0, T |\ln \e|], H^s) \to \Big(t \to \e \int_0^t \op_\e(\rho(t';t)) v(t') \, dt'\Big) \in C^0([0, T |\ln \e|], H^s).$$
   By \eqref{tilde-remainder}, there holds the uniform bound 
  \begin{equation} \label{bd-rf} \sup_{0 \leq t \leq T |\ln \e|} \| (\rho_0 u)(t) \|_{\e,s} \leq \e C|\ln \e|^{*} \sup_{0 \leq t \leq T |\ln \e|} \| u(t) \|_{\e,s-q_0 -1},
  \end{equation}
  for some $C = C(d,n,M,T) > 0.$ 
This implies that, for $\e$ small enough, depending on $d, n, M, T,$ the operator $\Id + \rho_0$ is invertible in the Banach algebra ${\mathcal L}(C^0([0, T |\ln \e|], H^s)).$ As a consequence, we can solve \eqref{cond-g} in $C^0([0, T |\ln \e|], H^s),$ 
 and obtain the representation formula \eqref{buff0.0}, in which
  the remainder $\e R :=  (\Id + \rho_0)^{-1} - \Id$
   satisfies \eqref{bd-R12}, by \eqref{bd-rf}.  
\end{proof}

It can be useful to know exactly how small $\e$ is required to be in Theorem {\rm \ref{th:duh}}. %

\begin{prop} \label{rem:eps} Given $T > 0,$ given a bounded family $M = M(t)$ in $S^0,$ given $f \in L^\infty([0, T |\ln \e|, H^s(\R^d)),$ given $u_0 \in H^s(\R^d),$ Theorem {\rm \ref{th:duh}} applies as soon as 
\be \label{cond:eps}
   \e |\ln \e|^{N^*} < C_0 \big( \sup_{0 \leq t \leq T |\ln \e|} \| M(t) \|_{\g T + C(d)} \big)^{-1}\,,
 \ee
  with constants $C_0 = C_0(d,n) > 0,$ where $d$ is the spatial dimension and $n$ the size of matrix $M(t,x,\xi),$ and $C(d) > 0,$ $N^* = N^*(\g T, n, d) > 0,$ with $\g$ given by \eqref{basic:S} or \eqref{basic:S:aut}.
\end{prop}

In particular, the order of regularity required for the symbol, namely $\g T + C(d),$  is a function of its $L^\infty$ norm $\g$.

Above, the notation $\| \cdot \|_r$ denotes the symbolic norm introduced in \eqref{symb:norms}.

\begin{proof}
A look at the proofs of Lemma {\rm \ref{lem:tildeS}} and Theorem {\rm \ref{th:duh}} shows that we need $0 < \e < 1$ to be small enough so that $\Id + \rho_0$ be invertible. According to bound \eqref{bd-rf}, this is implied by $C \e |\ln \e|^{*} < 1,$ for some constant $C$ depending in particular on $M$ and $\g T.$ How specifically this constant $C$ depends on $M$ and $T$ is found by going back to bound \eqref{bd:Sq} for the correctors, from which \eqref{bd-rf} derives. In \eqref{bd:Sq}, there appears an implicit multiplicative constant, which depends on $M$ through $\| M \|_{q_0 + C(d)},$ for some $C(d)$ depending only on $d.$ Also, in \eqref{bd:Sq} the implicit exponent in $|\ln \e|^*$ depends on $q_0$ and $n.$ Thus condition $C \e |\ln \e|^* < 1$ takes the form \eqref{cond:eps}. 
\end{proof}

\section{Application: sharp semigroup bounds} \label{sec:ex}

The results of Section \ref{sec:Duhamel} translate into sharp semigroup bounds. Here we restrict ourselves to the case of autonomous symbols $M \in S^0,$ and denote $\s(M(x,\xi))$ the spectrum of matrix $M(x,\xi) \in \C^{n \times n},$ for $(x,\xi) \in \R^d \times \R^d.$  

\begin{theo} \label{th:spectral} Given $T > 0$ and $M \in S^0,$ there holds for $\e$ small enough the upper bound 
 \be \label{up} \big| \exp \big( t \op_\e(M) \big) \big|_{L^2(\R^d) \to L^2(\R^d)} \lesssim |\ln \e|^{*} \exp \big( t \sup_{x,\xi} \Re e \, \sigma(M(x,\xi))\big),\ee
 uniformly in $t \in [0, T |\ln \e|],$ and, for some $u_\e$ on the unit sphere of $L^2(\R^d),$ some $x_0 \in \R^d,$ the lower bound
  \be \label{low}  \exp \big( t \sup_{x,\xi} \Re e \, \sigma(M(x,\xi)) \big) \lesssim |\ln \e|^* \big| \exp \big( t \op_\e(M) \big) u_{\e} \big|_{L^2(B(x_0, |\ln \e|^{-1})},\ee
 uniformly in $t \in [0, T |\ln \e|].$ 
\end{theo}

We use in the statement of Theorem \ref{th:spectral} the convention for $\lesssim$ and $|\ln \e|^*$ introduced below Lemma \ref{lem:tildeS}.

The bounds of Theorem \ref{th:spectral} are sharp in the sense that the lower growth rate is equal to the upper growth rate. Here we mean {\it exponential rate of temporal growth}, equal to $\sup_{x,\xi} \Re e \, \sigma(M).$ 

In the case of a symbol $M = M(x)$ that is independent of $\xi,$ elementary linear-algebraic bounds give this sharp rate of growth.

 The proof below, and Proposition \ref{rem:eps} above, show that in order for \eqref{up} and \eqref{low} to hold, we need $\e$ to satisfy a bound of the form $\e |\ln \e|^{N^*(M,T)} < C(M,T).$

\medskip

\begin{proof} By Lemma \ref{lem:tildeS} and Theorem \ref{th:duh}, there holds the bound, for $\e$ small enough,
 \be \label{3.0} \big| \exp (t \op_\e(M))|_{L^2 \to L^2} \leq |\op_\e(\Sigma(0;t))|_{L^2 \to L^2} + \e |\ln \e|^* \int_0^t |\op_\e(\Sigma(t';t))|_{L^2 \to L^2} \, dt',\ee
 where $\Sigma$ is the approximate solution operator defined in \eqref{def-S}. As already mentioned in Section \ref{sec:Duhamel}, since $M$ is autonomous, there holds $S(\t;t,x,\xi) = \exp((t - \t) M(x,\xi)).$ Thus bound \eqref{basic:S:aut} holds, and in the bounds \eqref{bd:S0} and \eqref{bd:S1} for $S$ and $S_1,$ and similarly in the bounds for the higher-order correctors $S_q,$ we may use $\g = \sup_{x,\xi} \Re e \, \s(M).$ Bounds \eqref{bd:S0} and \eqref{bd:S1} provide the bounds for the symbols that are used in the Calder\'on-Vaillancourt theorem (bound \eqref{action} in Section \ref{sec:symbols}), and this implies for $0 \leq t' \leq t \leq T |\ln \e|$ the bound 
 \be \label{3.00} |\op_\e(\Sigma(t';t))|_{L^2 \to L^2} \lesssim |\ln \e|^{*} e^{(t - t') \g}.\ee
 Using \eqref{3.00} in \eqref{3.0} we find \eqref{up}. 

\medskip
 
 We turn to a proof of the lower bound \eqref{low}. For any $\delta > 0,$ which we will eventually choose appropriately small, depending on $\e,$ there can be found $(x_0,\xi_0) \in \R^d \times \R^d$ and $\l_0$ in the spectrum of $M(x_0,\xi_0)$ such that $\Re e \, \l_0 - \delta > \g = \sup_{x,\xi} \Re e \, \s(M(x,\xi)).$ Let $u_\e(x) := e^{i x \cdot \xi_0/\e} \theta(x),$ where $\xi_0 \in \R^d$ and $\theta \in C^\infty_c(\R^d)$ is compactly supported around $x_0,$ with $|\theta|_{L^2} = 1.$ There holds  
 \be \label{op:dec:0} \op_\e(e^{t M}) u_\e = e^{i x \cdot \xi_0/\e} \int_{\R^d} e^{i x \cdot \xi} e^{t M(x,\xi_0 + \e \xi)} \hat \theta(\xi) \, d\xi = e^{i x \cdot \xi_0/\e} e^{t M(x, \xi_0)} \theta(x) + \e v_\e,\ee
 where
 $$ v_\e := e^{i x \cdot \xi_0/\e} \int_0^1 \int_{\R^d} e^{i x \cdot \xi} (\d_\xi e^{t M})(x,\xi_0 + \e \t \xi)  \widehat{\d_x \theta}(\xi) \, d\xi \, d\t. $$ %
Thus the first task ahead is to find a bound from below for the family of vectors $e^{t M(x,\xi_0)} \theta(x).$ Here we follow closely the proof of Theorem 1.2 in \cite{LNT}.  

\medskip

{\it First step.} Let $Q_0$ be the sum of the projectors onto the generalized eigenspaces associated with the eigenvalues $\l$ of $M(\cdot,\xi_0)$ which satisfy $\l \to \l_0$ as $x \to x_0,$ and parallel to the sum of the other generalized eigenspaces of $M(\cdot,\xi_0).$ If $\l_0$ is a coalescing point in the spectrum of $M(x_0,\xi_0)$ (that is, if there is more than one branch of eigenvalues $\l$ which satisfy $\l(x_0,\xi_0) = \l_0$), then the eigenvalues may not be smooth, and the corresponding eigenprojectors not even defined at $(x_0,\xi_0),$ but the eigenvalues are continuous (see for instance \cite{K}, or Proposition 1.1 in \cite{T}), and the projector $Q_0$ is smooth (see for instance Proposition 2.1 in \cite{T}). There holds $Q_0 \exp(t M) Q_0^{-1} = \exp( t Q_0 M Q_0^{-1}).$ We may choose $\theta(x) = \theta_0(x) Q_0(x)^{-1} \vec e,$ for a fixed vector $\vec e$ and a scalar $\theta_0.$ We let $M_0(x) = Q_0(x) M(x,\xi_0) Q_0(x)^{-1}.$ Then
 \be \label{eq:Q0} Q_0(x) \exp(t M(x,\xi_0)) \theta(x) = \theta_0(x) e^{t M_0(x)} \vec e.\ee
 The matrix $M_0$ is block diagonal. By choice of $Q_0,$ the eigenvalues of the top block converge to $\l_0$ as $x \to x_0,$ and the other block is identically zero. We may restrict our attention to the top left block, that is, assume that its size is $n.$ 

\medskip

{\it Second step.} Let $P$ be a constant invertible matrix so that $P M_0(x_0) P^{-1}$ is upper triangular, with $\l_0$ on the diagonal (by virtue of the above first step). Let $P_\mu$ be the diagonal matrix with diagonal entries $1, \mu^{-1}, \mu^{-2}, \dots, \mu^{1-n},$ for some $\mu > 0$ which we will eventually choose to be small, depending on $\e.$ There holds
$$ P_\mu P M_0(x_0) P^{-1} P_\mu^{-1} = \l_0 \Id + \mu J,$$
where $J$ is upper triangular, with zeros on the diagonal, and with entries over the diagonal which are all $O(\mu).$ By regularity of $M,$ 
$$  P_\mu P M_0(x) P^{-1} P_\mu^{-1} = \l_0 \Id + \mu J + (x - x_0) \cdot M_1(x),$$
for some bounded family of matrices $M_1.$ Further choosing the vector $\vec e$ from the first step in the form $\vec e = P^{-1} P_\mu^{-1} \tilde e,$ we thus consider  
\be \label{eq:Pmu} P_\mu P e^{t M_0(x)} \vec e = e^{t \l_0} e^{\mu J + (x - x_0) \cdot M_1(x)} \tilde e.\ee

\medskip

{\it Third step.} For any matrix $N,$ we denote $\Re e \, N$ the hermitian matrix $\Re e \, N = (N + N^*)/2.$ The eigenvalues $\tilde \l(x)$ of the Hermitian matrix $\Re e \, (\mu J + (x - x_0) \cdot M_1(x))$ are semisimple. In particular, they satisfy (see for instance Proposition 3.2 in \cite{T}):
 $$ |\tilde \l(x)| \lesssim \mu + |x - x_0| =: \tilde \g(x),$$
so that 
\be \label{bound:hermitian}
 \Re e \, (\mu J + (x - x_0) \cdot M_1(x)) + \tilde \g(x) \geq 0,  
\ee
in the sense of hermitian matrices. 
 Let $\tilde S(t,x) = e^{t(\mu J + (x - x_0) M_1(x))}.$ There holds
 $$ \frac{1}{2} \d_t (|\tilde S(t,x) \tilde e|^2) = \Big( \, \Re e\, (\mu J + (x - x_0) \cdot M_1(x))  \tilde S(t,x)\tilde e, \, \tilde S(t,x)\tilde e \, \Big),$$
 where $(\cdot,\cdot)$ denotes the hermitian scalar product in $\C^n.$ By \eqref{bound:hermitian}, this implies 
$$ %
 |\tilde S(t,x) \tilde e\,| \geq e^{-2 t \tilde \g(x)} |\tilde e|.
$$ %
Thus with \eqref{eq:Pmu},
$$ |P_\mu P e^{t M_0(x)} \vec e| \geq e^{t (\Re e \, \l_0 - 2 \tilde \g(x))} |\tilde e|,$$
and, with \eqref{eq:Q0}, we arrive at
$$ |e^{t M(x,\xi_0)} \theta(x)| \geq e^{t (\Re e \, \l_0 - 2 \tilde \g(x))}  |P_\mu P Q_0(x)|^{-1} |\theta_0(x)| |\tilde e|.$$
We let $\mu = |\ln \e|^{-1}.$ Then, for $|x - x_0|= O(|\ln \e|^{-1}),$ the exponential $e^{-2 t \tilde \g(x)}$ is bounded away from zero for $t = O(|\ln \e|).$ By regularity of $Q_0$ and choice of $\mu,$ there holds the lower bound $|P_\mu P Q_0(x)|^{-1} \gtrsim |\ln \e|^{-(n-1)}.$ We choose $\tilde e = (1,0,\dots,0).$ Then, $P_\mu^{-1} \tilde e = \tilde e,$ and 
$$ |e^{t M(\cdot,\xi_0)} \theta|_{L^2(B(x_0,|\ln\e|^{-1}))} \gtrsim |\ln \e|^{-(n-1)} e^{t \Re e \, \l_0} |\theta_0|_{L^2(B(x_0,|\ln\e|^{-1}))}.$$
Since $|\theta_0|_{L^2} = 1,$ with $\theta_0$ continuous and such that $\theta_0(x_0) = 1,$ the localized $L^2$ norm $|\theta_0|_{L^2(B(x_0,|\ln\e|^{-1}))}$ is bounded from below by a constant times $|\ln \e|^{-d}.$ Thus we obtain
$$ %
  e^{t \Re e \l_0} \lesssim |\ln \e|^* |e^{t M(\cdot,\xi_0)} \theta|_{L^2(B(x_0,|\ln\e|^{-1}))}.
$$ %
Further choosing $\delta = |\ln \e|^{-1},$ this gives
\be \label{final:3} 
 e^{t \g} \lesssim |\ln \e|^* |e^{t M(\cdot,\xi_0)} \theta|_{L^2(B(x_0,|\ln\e|^{-1}))}.
\ee

\medskip

{\it Conclusion.} We go back to \eqref{op:dec:0}.
We may use bound \eqref{action} in order to control the $L^2$ norm of $v_\e.$ Here we note that $\xi_0$ is a priori $\e$-dependent, but this does not affect the estimate for $v_\e,$ since the symbolic norms that appear in the Calder\'on-Vaillancourt bound are translation invariant. Thus
\be \label{bd:ve} |v_\e|_{L^2(\R^d)} \lesssim |\ln \e| e^{t \g} |\d_x \theta|_{L^2(\R^d)},\ee
the $|\ln\e|$ factor coming from the $t$ prefactor in  $\d_\xi (e^{t M}).$ By Theorem \ref{th:duh},
 $$ e^{t \op_\e(M)} u_\e = \op_\e(e^{t M}) u_\e - ({\rm I} + {\rm II}),$$
 where ${\rm I}$ is the contribution of the correctors $S_q$ to $\Sigma,$ and ${\rm II}$ is the time-integrated error term in \eqref{buff0.0}. By \eqref{bd:Sq}, \eqref{bd-R12} and \eqref{3.00}, both ${\rm I}$ and ${\rm II}$ are $O(\e |\ln \e|^* e^{t \g})$ in $L^2.$ Hence \eqref{op:dec:0}, \eqref{final:3} and \eqref{bd:ve} yield the result. \end{proof}

The following upper-growth bound in Sobolev and sup norms will be useful in our application in Section \ref{sec:insta}. 

\begin{theo} \label{prop:s} Given $T > 0,$ $M \in S^0,$ there holds for $\e$ small enough: for any $s \in \R,$ any $u \in H^s(\R^d)$ 
 \be \label{up:s}
  \big\| \exp \big( t \op_\e(M) \big) u \big\|_{\e,s}  \lesssim |\ln \e|^{*} \exp \big( t \sup_{x,\xi} \Re e \, \sigma(M(x,\xi))\big) \| u \|_{\e,s},\ee
 uniformly in $t \in [0, T |\ln \e|],$ and, for any $s > d/2,$ any $u \in H^{s}(\R^d),$ 
 \be \label{up:infty} \begin{aligned} \big| \exp \big( t \op_\e(M) \big) u \big|_{L^\infty} &  \lesssim |\ln \e|^{*} \exp \big( t \sup_{x,\xi} \Re e \, \sigma(M(x,\xi))\big) | u |_{L^\infty} \Big(1 + \ln\Big(\frac{\|u\|_{\e,s}}{|u|_{L^\infty}}\Big)\Big), \end{aligned} \ee
 uniformly in $t \in [0, T |\ln \e|].$
\end{theo} 

The semiclassical Sobolev norms $\| \cdot \|_{\e,s}$ are defined in \eqref{semicl}.

\smallskip

The difference between \eqref{up} and \eqref{up:s} is subsumed in the semi-implicit factor $|\ln \e|^*.$ 

\begin{proof} We argue as in the first part of the proof of Theorem \ref{th:spectral}, and use the Sobolev bound \eqref{continuite:s} for \eqref{up:s} and the pointwise bound \eqref{sup:pw} for \eqref{up:infty}. 
\end{proof}

\subsection{Comparison with the spectral mapping theorem and G\r{a}rding's inequality} \label{sec:spmap}

We argue here that the bounds of Theorem \ref{th:spectral} are more useful than bounds derived from the spectral mapping theorem, and sharper than bounds derived from G\r{a}rding's inequality.

\begin{prop}[Upper bound via the spectral mapping theorem]  \label{prop:spectral:mapping} Given $M \in S^0,$ there holds  
$$ %
 \big| \exp\big( t \op_\e(M)\big) \big) \big|_{L^2(\R^d) \to L^2(\R^d)} \lesssim C(\e,\delta) \exp\big( t (\delta + \g_\e) \big),
$$ %
 for any $\delta > 0,$ all $t \geq 0,$ some $C(\e,\delta ) > 0,$ with the growth rate 
 $$\g_\e := \sup \Re e \, \sigma(\op_\e(M)).$$
\end{prop}
 
  The proof below shows that the constant $C(\e,\delta)$ is bounded as $\e \to 0$ and typically unbounded as $\delta \to 0.$ 
  
 \begin{proof} First, by the semigroup property, there holds
$$ %
  \big| \exp\big( t \op_\e(M)\big) \big) \big|_{L^2 \to L^2} \leq c_0 \big| \big(\exp\big(\op_\e(M)\big)\big)^{[t]} \big|_{L^2 \to L^2},$$ %
 where 
 $c_0(\e) = \max_{0 \leq s \leq 1} |\exp(s \op_\e(M))|_{L^2 \to L^2}.$ Second, by Gelfand's formula, for any $\eta >0,$ for $t \geq t(\eta),$ there holds  
$$ %
 \big| \big(\exp\big(\op_\e(M)\big)\big)^{[t]} \big|_{L^2 \to L^2} \leq \Big(\rho\big(\exp\op_\e(M) \big) \big) + \eta\Big)^{[t]}.
 $$ %
 Third, as a consequence of the spectral mapping theorem (see for instance \cite{EG}, Lemma 3.13), 
$$ %
 \rho(\exp\op_\e(M)) \leq \exp\big( \sup \Re e \, \s(\op_\e(M))\big).$$ %
 Finally, given $\delta > 0,$ for some $\eta = \eta(\delta),$ the above bounds combine to yield the result. 
\end{proof}

In view of Proposition \ref{prop:spectral:mapping}, we may argue that Theorem {\rm \ref{th:spectral}} corresponds to a reduction to finite dimensions, as announced in the introduction. Indeed, the growth rate $\g_\e$ involves the spectra of the $L^2 \to L^2$ operators $\op_\e(M),$ while the upper bound \eqref{up} in Theorem \ref{th:spectral} involves only the $n \times n $ matrices $M(x,\xi).$ In particular, $\g_\e$ might be very difficult to compute, while $\sup \Re e \, \s(M)$ is readily computable, at least in theory. 
 
 \medskip

 Another classical way to derive semigroup bounds is G\r{a}rding's inequality.%

\begin{prop}[Upper bound via G\r{a}rding's inequality] \label{prop:garding:bound} Given $M \in S^0,$ there holds 
\be \label{up:g}
   \big| \exp\big( t \op_\e(M)\big) \big) \big|_{L^2 \to L^2} \lesssim \exp\big( \, t \big( \sup_{x,\xi} \sigma\big(\Re e\, M\big)+ \e C(M)\big)\,\big),
\ee
for all $t \geq 0,$ where $\Re e \, M$ is the hermitian matrix $\Re e \, M := (M + M^*)/2,$ and $C(M) > 0$ can be expressed in terms of a norm $\| M \|_{C(d)}$ of $M$ (see Sections {\rm \ref{sec:Fef-pho}} and {\rm \ref{sec:symbols}}). 
\end{prop}

\begin{proof}[Proof of Proposition \ref{prop:garding:bound}] Let $u \in L^2,$ and consider the solution $v$ to $\d_t v = \op_\e(M) v$ issued from $v(0) = u.$ Denoting $\bar \g := \sup_{x,\xi} \sigma\big(\Re e\, M\big),$ we let $w(t) = e^{- t \bar \g} v(t).$ Then, $w$ solves $\d_t w = \op_\e(M - \bar \g),$ so that $(1/2) \d_t |w|_{L^2}^2 = \Re e\, (\op_\e(M - \bar \g) u, u)_{L^2}.$ Since $\Re e \, (M - \bar \g) \leq 0,$ we may apply G\r{a}rding's inequality (Theorem {\rm \ref{th:G}})\footnote{Here we are using a semiclassical version of G\r{a}rding's inequality, in which the gain of one derivative in the remainder translates into a power of $\e.$ As discussed in Section \ref{sec:matrix}, our proof of Theorem \ref{th:G} yields for matrix-valued symbols a gain of one-half of a derivative, implying an error in $\e^{1/2} C(M)$ in \eqref{up:g}.}. This gives $\d_t |w|_{L^2}^2 \leq \e C(M) |w|_{L^2}^2,$ whence \eqref{up:g}. 
\end{proof}

When $M$ is hermitian, the bounds \eqref{up:g} and \eqref{up} coincide to first order. That is, if $M = \Re e \, M,$ then the growth rates are equal to first order: $\sup \Re e \, \s(M) = \sup \, \s(\Re e \, M).$ 

Otherwise, bound \eqref{up} is typically strictly sharper than \eqref{up:g}, as shown by example $\dsp{M = \left(\begin{array}{cc} 0 & a_{12} \\ a_{21} & 0 \end{array}\right),}$ for which $\max \Re e \, \s(M) = (a_{12} a_{21})^{1/2},$ assuming $a_{12} a_{21} > 0,$ and $\max \s(\Re e \, M) = (a_{12} + a_{21})/2,$ strictly greater than $(a_{12} a_{21})^{1/2}$ if $a_{12} \neq a_{21},$ by the arithmeti\-co-geometric inequality. In this example $M$ is symmetrizable. An example with a Jordan block is given by $M = \left(\begin{array}{cc} a & 1 \\ 0 & a \end{array}\right),$ where $a > 0,$ for which $\max \Re e \, \s(M) = a < \max \s(\Re e \, M) = a + 1/2.$

 \subsection{Application to instability} \label{sec:insta}

The semigroup bounds of Theorem \ref{th:spectral} translate into an instability result if the symbol $M$ has unstable spectrum. Consider the situation of a semilinear equation
\be \label{ideal}
 \d_t u = \op_\e(M) u + B(u,u),
\ee
where $M \in S^0$ and $B: \R^n \times \R^n \to \R^n$ is bilinear. All the results below are easily adapted to more general polynomial nonlinearities.  

\begin{lem} \label{lem:sup:norm} Given $u_0 \in H^s,$ with $s > d/2,$ equation \eqref{ideal} has a unique solution $u \in C^0([0, t_\star(\e)), H^s(\R^d))$ issued from $u_0,$ for some $t_\star(\e) > 0,$ which depends on $u_0$ and of course on $\e.$ The map $t \to |u(t)|_{L^\infty}$ is continuous in $[0, t_\star(\e)).$ The solution can be continued so long as its $\| \cdot \|_{\e,s}$ norm is finite. 
\end{lem}

\begin{proof} By bound \eqref{continuite:s} in the Appendix, the operator $\op_\e(M)$ is linear bounded $H^s \to H^s.$ For $s > d/2,$ the Sobolev embedding $H^s \hookrightarrow L^\infty,$ with $H^s$ endowed with norm $\| \cdot \|_{\e,s}$ defined in \eqref{semicl}, has norm $c_s \e^{-d/2},$ for some $c_s > 0.$ Thus there holds, by bilinearity of $B,$ for some $C_B > 0,$ 
 \be \label{buus} \|B(u,u) \|_{\e,s} \leq C_B |u|_{L^\infty} \| u \|_{\e,s} \leq C_B c_s \e^{-d/2} \|u \|_{\e,s}^2.\ee
 Equation \eqref{ideal} therefore appears as an ordinary differential equation in $H^s.$ The existence and uniqueness in $C^0([0, t_\star(\e)), H^s(\R^d)),$ for some $t_\star(\e) > 0,$ of a solution to \eqref{ideal} issued from $u_0 \in H^s,$ $s > d/2,$ follows from the Cauchy-Lipschitz theorem. The existence time $t_\star(\e) > 0$ a priori depends on $\e,$ and might be small. The map $t \to \| u(t)\|_{\e,s}$ is continuous, by the triangular inequality. For $[t, t+h] \subset [0, t_\star(\e)),$ there holds
 $$ |u(t+h) - u(t)|_{L^\infty} \leq c_s \e^{-d/2} \| u(t+h) - u(t)\|_{\e,s},$$
 implying continuity of $t \to |u(t)|_{L^\infty}$ in $[0, t_\star(\e)).$ Finally, bound \eqref{buus} and continuity of the linear solution operator $e^{t \op_\e(M)}$ in $\| \cdot \|_{\e,s}$ norm (estimate \eqref{up:s} in Theorem \ref{prop:s}) classically imply that elements of a given ball in $H^s$ generate solutions with a common existence time, and this implies that the solution can be continued so long as its $\| \cdot \|_{\e,s}$ norm is finite. 
\end{proof}

\begin{theo} \label{th:instability} If the spectrum of symbol $M$ is unstable, meaning
$$ {\rm sp}\, M(x,\xi) \cap \{ z \in \C, \, \Re e \, z > 0\} \neq \emptyset,\quad \mbox{for some $(x,\xi) \in \R^d \times \R^d$},$$ then for any $K > 0,$ some $P > 0,$  any $s > d/2,$ for $\e$ small enough, some $T_\star(\e) = O(|\ln \e|),$ we can find a datum 
$$\mbox{$u(0) \in C^\infty_c(\R^d),$ \,\, with \,\, $|u(0)|_{L^\infty} + |u(0)|_{\e,s} \lesssim \e^{K},$}$$
such that the solution $u$ to \eqref{ideal} issued from $u(0)$ belongs to $C^0([0,T_\star(\e)], H^s(\R^d)),$ and satisfies
\be \label{low:th} \sup_{0 \leq t \leq T_\star(\e)} |u(t)|_{L^\infty(\R^d)} \geq |\ln \e|^{-P}.\ee
\end{theo}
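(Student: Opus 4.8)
The plan is to reduce the nonlinear problem to the linear semigroup bounds of Theorem~\ref{th:spectral} via a standard bootstrap argument, choosing the initial datum to be precisely the function $u_\e$ constructed in the proof of the lower bound \eqref{low}. First I would fix $\g := \sup_{x,\xi} \Re e\, \s(M) > 0$ (positive by the instability hypothesis), pick the amplitude $a(0) = \e^K$, and set $u(0) = \e^K u_\e$, where $u_\e = e^{i x \cdot \xi_0/\e}\theta(x)$ with $|\theta|_{L^2}=1$ as in Theorem~\ref{th:spectral}. The oscillation at frequency $\e^{-1}$ gives $|u(0)|_{H^s} \lesssim \e^{K-s}$, and $|u(0)|_{L^\infty} = \e^K$ up to the harmless normalization of $\theta$. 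Let $v(t) := \exp(t\op_\e(M)) u(0)$ be the linear flow; by Duhamel, $u(t) = v(t) + \int_0^t \exp((t-t')\op_\e(M)) f(u(t'))\, dt'$.

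The core of the argument is a continuation estimate on the interval where $u$ stays small in $L^\infty$. Define $T_\star(\e)$ as the first time at which $|u(t)|_{L^\infty}$ reaches the threshold $|\ln\e|^{-K'}$ (for a $K'$ to be chosen), or $T|\ln\e|$, whichever is smaller, for a suitable fixed $T > 0$. On $[0, T_\star(\e)]$ one has $|u(t)|_{L^\infty} \leq |\ln\e|^{-K'}$, so by \eqref{ideal2} the nonlinear term obeys $\|f(u(t))\|_{\e,s} \leq C|\ln\e|^{-K'}\|u(t)\|_{\e,s}$. Applying the upper bound \eqref{up} of Theorem~\ref{th:spectral} inside the Duhamel integral and using Gronwall's lemma in the semiclassical Sobolev norm, I would obtain, on $[0,T_\star(\e)]$,
\[
 \|u(t)\|_{\e,s} \leq C|\ln\e|^{N^*}\,\|u(0)\|_{\e,s}\, \exp\!\big( t(\g + C|\ln\e|^{-K'}|\ln\e|^{N^*}) \big) \lesssim |\ln\e|^{N^*}\e^{K-s}\,e^{t\g},
\]
the extra factor $\exp(Ct|\ln\e|^{N^*-K'})$ being bounded by a constant on $t \leq T|\ln\e|$ once $K' > N^* + 1$. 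Taking $s > d/2$ and using the semiclassical Sobolev embedding $|u|_{L^\infty} \lesssim \e^{-d/2}\|u\|_{\e,s}$ for $s > d/2$ then controls $|u(t)|_{L^\infty}$ throughout, closing the bootstrap and ensuring $u \in L^\infty([0,T_\star(\e)],H^s)$ for all $s>d/2$.

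It remains to show the solution actually reaches the threshold, i.e. that $T_\star(\e) = O(|\ln\e|)$ with equality \eqref{low:th} at the endpoint. Here I would invoke the \emph{lower} bound \eqref{low} of Theorem~\ref{th:spectral} for the linear flow: on the ball $B(x_0, |\ln\e|^{-1})$,
\[
 |v(t)|_{L^2(B(x_0,|\ln\e|^{-1}))} \geq C|\ln\e|^{-d}\e^{K}\,e^{t\g},
\]
while the Duhamel correction is bounded, using the estimates just obtained together with \eqref{up}, by $C|\ln\e|^{N^*}\e^{K}\,|\ln\e|^{-K'}\,e^{t\g}$, which is a small fraction of the main term once $K'$ is large. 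Hence $|u(t)|_{L^2(B(x_0,|\ln\e|^{-1}))} \gtrsim |\ln\e|^{-d}\e^{K}e^{t\g}$, so this quantity reaches order $|\ln\e|^{-d-1}$, say, at a time $T_\star(\e)$ satisfying $\e^K e^{T_\star(\e)\g} \sim |\ln\e|^{-1}$, i.e. $T_\star(\e) = \g^{-1}(K|\ln\e| + O(\ln|\ln\e|)) = O(|\ln\e|)$, which we may arrange to be $\leq T|\ln\e|$ by taking $T > K/\g$. At that time, converting the $L^2(B(x_0,|\ln\e|^{-1}))$ lower bound into an $L^\infty$ lower bound (the ball has volume $\sim |\ln\e|^{-d}$, so $|u|_{L^\infty} \gtrsim |\ln\e|^{d/2}\cdot|\ln\e|^{-d}\cdot|\ln\e|^{-d}$-type bookkeeping, absorbed into a power $|\ln\e|^{-K'}$) yields \eqref{low:th}.

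The main obstacle I anticipate is the interplay of the two polylogarithmic losses — the $|\ln\e|^{N^*}$ in the upper bound \eqref{up} feeding through Gronwall over a time $O(|\ln\e|)$, and the $|\ln\e|^{-d}$ deficit in the lower bound \eqref{low} — which must be reconciled by choosing $K'$ large enough that the nonlinear correction in the Duhamel formula is dominated by the linear lower bound \emph{throughout} $[0,T_\star(\e)]$, not merely at a single time. This forces a careful ordering: first fix $T$ (hence $q_0$ and $N^*$) as a function of $K$ and $\g$, then fix $K'$ as a function of $N^*$ and $d$, then send $\e \to 0$. A secondary technical point is that the lower bound \eqref{low} is stated for the pure linear flow with the specific datum $u_\e$; one must check that multiplying by $\e^K$ and adding the (smaller) nonlinear Duhamel term does not destroy the localization near $x_0$, which follows because the correction is estimated in the global $L^2$ norm and a fortiori on $B(x_0,|\ln\e|^{-1})$.
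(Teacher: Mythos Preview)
Your overall architecture---choose $u(0)=\e^K u_\e$, run Duhamel, use \eqref{up} for the upper bound and \eqref{low} for the lower bound, close a bootstrap on $|u|_{L^\infty}$---is exactly the paper's. But there is a genuine gap at the step where you write ``using the semiclassical Sobolev embedding $|u|_{L^\infty} \lesssim \e^{-d/2}\|u\|_{\e,s}$ \dots\ closing the bootstrap.'' That embedding costs a power of $\e$, and this is fatal here. Indeed $\|u(0)\|_{\e,s}\lesssim \e^K$ (the oscillation $e^{ix\cdot\xi_0/\e}$ is invisible in semiclassical norms; your $\e^{K-s}$ is the classical $H^s$ size), so your Gronwall gives $\|u(t)\|_{\e,s}\lesssim |\ln\e|^{N^*}\e^K e^{t\g}$, whence $|u(t)|_{L^\infty}\lesssim \e^{-d/2}|\ln\e|^{N^*}\e^K e^{t\g}$. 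The bootstrap assumption $|u|_{L^\infty}\le |\ln\e|^{-K'}$ therefore closes only for $t\lesssim \frac{K-d/2}{\g}|\ln\e|$, whereas the linear lower bound $|v(t)|_{L^2(B)}\gtrsim|\ln\e|^{-d}\e^K e^{t\g}$ does not reach polylogarithmic size until $t\sim\frac{K}{\g}|\ln\e|$. On the missing interval of length $\frac{d}{2\g}|\ln\e|$ you have no control at all, and at the last controlled time the lower bound is only $O(\e^{d/2})$. No choice of $K'$ repairs this: the loss is a genuine power of $\e$, not a logarithm.

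The paper closes this gap not by Sobolev embedding but by the pointwise bound \eqref{sup:1}, which gives $|\op_\e(a)w|_{L^\infty}\lesssim \|a\|_{C(d)}\,|w|_{L^\infty}\big(1+|\ln\e|+\ln(\|w\|_{\e,s}/|w|_{L^\infty})\big)$, i.e.\ an $L^\infty\to L^\infty$ estimate for the propagator with only logarithmic loss. Feeding this into Duhamel (via the representation of Theorem~\ref{th:duh}) yields a coupled pair of inequalities for $|u|_{L^\infty}$ and $\|u\|_{\e,s}$ in which \emph{both} are bounded by $|\ln\e|^{N^*}\e^K e^{t\g}$ up to the full time $T_\star(\e)=\frac{K}{\g}|\ln\e|-\frac{K_1}{\g}\ln|\ln\e|$; only then does the comparison with the lower bound go through. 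Your ordering of constants ($T$, then $N^*$, then $K'$, then $\e$) and your handling of the $L^2(B)\to L^\infty$ conversion at the end are correct, but the argument cannot be completed without replacing the crude embedding by \eqref{sup:1} or an equivalent $L^\infty$-propagation estimate that loses only logarithms.
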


Thus we obtain a strong, albeit relative, instability under the mere assumption that the symbol has unstable spectrum. The instability is relative, in the sense that the deviation from the trivial solution depends on $\e.$ The deviation, however, is strong: it is expressed in terms of an inverse power of $|\ln \e|,$ starting from an initial amplitude that is an arbitrarily large power of $\e.$ In particular, the lower bound \eqref{low:th} implies $|u(T_\star(\e))|_{L^\infty} \geq \e^\a,$  for any $\a > 0$ and $\e$ small enough. By comparison, less-than-optimal semigroup bounds would only imply a lower bound in $\e^{\a(K)},$ for some $\a(K) > 0.$ We expand on this point after the proof of Theorem \ref{th:instability}. 

\begin{proof}[Proof of Theorem \ref{th:instability}] The parameters $K > 0$ and $s > d/2$ are given. We apply Theorem \ref{prop:s}: for $\g := \sup \Re e \, \s(M) > 0$ there holds
\be \label{after} 
 \begin{aligned} \big\| \exp \big( t \op_\e(M) \big) \, v \|_{\e,s} & \leq |\ln \e|^{N} e^{t \g} \| v \|_{\e,s} \\
  \big| \exp \big( t \op_\e(M) \big) \, v |_{L^\infty} & \leq |\ln \e|^{N} e^{t \g} | v |_{L^\infty} \big(1 + |\ln \e| + \ln \frac{\| v \|_{\e,s}}{|v|_{L^\infty}}\big), \end{aligned}\ee
  uniformly in $t \in [0, K |\ln \e|/\g],$ for $\e$ small enough and some $N > 0,$ depending on $M$ and $K,$ for all $v \in H^s.$ Here we do need to track down powers of $|\ln \e|,$ hence give up notation $|\ln \e|^*.$ 
  
  By Theorem \ref{th:spectral} and its proof, there holds also
  \be \label{low:after} |\ln \e|^{-N} \e^K e^{t \g} \leq \big| \exp \big( t \op_\e(M) \big) u_0 \big|_{L^2(B)},\ee
 for some ball $B = B(x_0, |\ln\e|^{-1}),$ and some $\e$-dependent family of data $u_0 \in H^s,$ such that 
 \be \label{after:u0}
 |u_0|_{L^\infty} = \e^K, \qquad \|u_0\|_{\e,s} = C_s \e^K,
 \ee
 for some $C_s > 0.$ 
Let the limiting observation time 
$$ %
T_\star(\e) := \frac{K}{\g} |\ln \e| - (4 N + 5) \ln |\ln \e|.
$$ %
With this choice there holds
\be \label{entails}
 \lim_{\e \to 0} \sup_{0 \leq t \leq T_\star(\e)} \e^K |\ln \e|^{4 ( N + 1)} e^{\g t}  = 0.
\ee
All the estimates below are valid for small enough $\e,$ depending only on $M,$ $K,$ $P,$ and the space dimensions. Denote, for $p \in \R,$ 
\be \label{y+y-}
 y_p(t) := |\ln \e|^p \e^K e^{t \g}.
 \ee
Denote also $u$ the solution issued from the datum $u_0$ described above. Its existence is granted by Lemma \ref{lem:sup:norm}. The maximal interval of existence is denoted $[0, t_\star(\e)).$ The function $t \to |u(t)|_{L^\infty}$ is continuous over $[0, t_\star(\e)).$ Let 
 $$ u_f(t) = e^{t \op_\e(M)} u_0, \qquad g(t) = \int_0^t e^{(t - t') \op_\e(M)} B(u(t'), u(t')) \, dt',$$
 so that $u = u_f + g.$ %

\medskip

 {\it Bounds for $u_f$.} By \eqref{after}(i), using notation introduced in \eqref{y+y-}, there holds
\be \label{free:s} \|u_f(t)\|_{\e,s} \leq C_s y_{N}(t), \quad \mbox{for some $C_s > 0.$}\ee
By \eqref{after}(ii) and \eqref{after:u0}, there holds, for some $C > 0,$ 
\be \label{free:pw} |u_f(t)|_{L^\infty} \leq C  y_{N + 1}(t).\ee

\medskip

{\it Propagation of upper bounds up to time $T_\star(\e).$} Consider the bounds
\be \label{at:t'}
\left\{\begin{aligned} \| u (t) \|_{\e,s} & \leq 4 C_s y_{N}(t) \\
|u(t)|_{L^\infty} & \leq 4 C  y_{N + 1}(t),
 \end{aligned}\right.
\ee
where $C > 0$ and $C_s > 0$ are as in \eqref{free:s} and \eqref{free:pw}.
 Let 
$$ I  = \Big\{ t \in [0, T_\star(\e)] \cap [0, t_\star(\e)), \quad \forall\, t' \in (0,t),\,\,\, \mbox{bounds \eqref{at:t'} hold at $t'$} \Big\}.$$
The goal is to show that $I = [0, T_\star(\e)] \cap [0,t_\star(\e)).$ The set $I$ is not empty by continuity of the $\| \cdot \|_{\e,s}$ and $|\cdot|_{L^\infty}$ norms of the solution (see Lemma \ref{lem:sup:norm}), and bounds \eqref{after:u0} for the initial datum. Besides, $I$ is closed, by construction. Indeed, if a sequence $(t_n) \subset I$ converges to $t_\infty \in [0, T_\star(\e)] \cap [0,t_\star(\e)),$ then we directly have $t_\infty \in I,$ unless $t_n < t_\infty$ for all $n.$ In this case, given $t' < t_\infty,$ then there exists $n_0$ such that $t' < t_{n_0} < t_\infty,$ and since $t_{n_0} \in I,$ this implies that bounds \eqref{at:t'} hold at $t'.$ 

We now prove that $I$ is open in $[0, T_\star(\e)] \cap [0,t_\star(\e)).$ Let $t \in I.$ There holds, by \eqref{after}(i) and \eqref{free:s},
 $$ \| u(t)\|_{\e,s} \leq C_s y_{N}(t) + \int_0^t |\ln \e|^{N} e^{(t - t') \g} \|B(u(t'),u(t'))\|_{\e,s} \, dt'.$$
By bilinearity of $B$ and bounds \eqref{at:t'}, for $t' \in [0,t],$ 
\be \label{up:Buu} \| B(u(t'), u(t')\|_{\e,s} \lesssim |u(t')|_{L^\infty} \| u(t') \|_{\e,s} \lesssim y_{N+1}(t') y_N(t').\ee
Thus
\be \label{for:I} \| u(t)\|_{\e,s} \leq C_s y_N(t) + C' y_{2 N + 1}(t) y_N(t), \quad \mbox{for some $C' > 0.$}\ee
But then $y_{2N + 1}(t)$ is small in the limit $\e \to 0,$ uniformly in $t \in [0, T_\star(\e)],$ by virtue of \eqref{entails}. Thus from \eqref{for:I} we deduce the bound at $t \in I:$ 
 $$ \| u(t)\|_{\e,s} \leq 2 C_s y_{N}(t),$$
 implying, by continuity of $\| u(\cdot)\|_{\e,s},$ that the upper bound \eqref{at:t'}(i) holds over a small time interval $[t, t+ h),$ for some $h > 0.$

Next, by \eqref{free:pw} and \eqref{after}(ii),
$$ |u(t)_{L^\infty} \leq C y_{N+1}(t) +  \int_0^t |\ln \e|^{N} e^{(t - t') \g} |B(u,u)(t')|_{L^\infty}\Big(1 + |\ln \e| + \ln \frac{\| B(u,u)(t')\|_{\e,s}}{|B(u,u)(t')|_{L^\infty}}\Big)\,dt'.$$
By \eqref{up:Buu}, there holds for $t' \in [0,t],$
$$ %
\big|\ln \| B(u,u)(t')\|_{\e,s} \big| \lesssim |\ln \e|.
$$ %
For the other logarithmic term, we use the elementary bound, for $x > 0,$ 
$$ x |\ln x| \lesssim x^{2/3} + x^2.$$
Since $B$ is bilinear, this implies  
$$  |B(u,u)(t')| \, \big|\ln |B(u,u)(t')|_{L^\infty} \big| \lesssim |u(t')|_{L^\infty}^{4/3} + |u(t')|_{L^\infty}^4.$$ 
Thus, with \eqref{at:t'}(ii), for $t' \in [0,t],$ for some $C' > 0,$ 
$$ |u(t)_{L^\infty} \leq C y_{N+1}(t) +  C' \big(y_{2 N+1}(t) + y_{4(N+1)}(t)^{1/3} + y_{4N/3 + 1}(t)^3 \big) y_{N+1}(t).$$
By \eqref{entails}, the function $y_{2N+1},$ $y_{4(N+1)}$ and $y_{4N/3+1}$ all converge to $0$ as $\e \to 0,$ uniformly in $t \in [0, T_\star(\e)].$ This proves
$$ |u(t)|_{L^\infty} \leq 2 C y_{N+1}(t),$$
for small enough $\e,$ and thus continuation of the a priori bound \eqref{at:t'} to the right of $t \in I.$ 

\medskip

{\it Conclusion.} The set $I$ is non empty, closed and open in $[0, T_\star(\e)] \cap [0,t_\star(\e)),$ thus equal to $[0, T_\star(\e)] \cap [0,t_\star(\e)).$ In particular, for fixed $\e,$ the function $t \to \|u(t)\|_{\e,s}$ is bounded in $I.$ By the continuation criterion of Lemma \ref{lem:sup:norm}, this implies $T_\star(\e) < t_\star(\e).$ Evaluating at $t = T_\star(\e),$ we find, by \eqref{low:after} and the same upper bounds as above,
$$ |u(T_\star(\e))|_{L^2(B)} \geq y_{-N}(T_\star(\e))(1 - o(1)),$$
where $o(1)$ is meant in the limit $\e \to 0.$ This implies \eqref{low:th}, with $P = 5 N + 5.$  
\end{proof}

 We conclude this Section by arguing that less-than-optimal semigroup bounds, such as given by G\r{a}rding's inequality, as seen in Proposition \ref{prop:garding:bound}, imply a much  weaker form of instability for the trivial solution to \eqref{ideal}. 
 
\begin{rem} \label{rem:rem} Given $M \in S^0$ with unstable spectrum, as in Theorem {\rm \ref{th:instability}}, if instead of the upper and lower bounds \eqref{after} and \eqref{low:after} for the group of operators $\exp(t \op_\e(M)),$ we had similar bounds with an {\it upper} rate of exponential growth $\bar \g$ and a {\it lower} rate of exponential growth $\g,$ such that $ \g < \bar \g,$ then the result of Theorem {\rm \ref{th:instability}} would still hold, but only with the weaker deviation estimate  
 \be \label{holder} \sup_{\begin{smallmatrix} 0 < \e  < 1 \\ 0 \leq t \leq T_\star(\e) \end{smallmatrix} } \frac{|u(t) |_{L^\infty}}{|u(0)|_{L^\infty}^{\b}} = \infty, \quad T_\star(\e) = O(|\ln \e|), \quad \mbox{for $\dsp{\b > 1 - \frac{\g}{\bar \g}.}$}\ee
 \end{rem}
 
 \begin{proof}[Verification of the claim in Remark \ref{rem:rem}] It suffices to adapt the proof of Theorem \ref{th:instability}. Disregarding powers of $|\ln \e|,$ the goal is to compare the free solution $u_f$ to a Duhamel term bounded in $\| \cdot \|_{\e,s}$ by $\dsp{\int_0^t e^{\bar \g (t - t')} |u|_{L^\infty} \|u\|_{\e,s} \, dt'.}$ Existence is granted in time $\dsp{\frac{K}{\bar \g} |\ln \e|}.$ The free solution is bounded from below by $\e^K e^{t \g},$ thus dominates the Duhamel term only so long as $|u|_{L^\infty} \leq \e^\a$ and $t \leq \a |\ln \e|/(\bar \g - \g),$ and is greater than $\e^\a$ in time $(K - \a) |\ln \e|/\g.$ We conclude that we have a proof of a deviation $|u|_{L^\infty} \geq (1/2) \e^\a$ from $|u(0)|_{L^\infty} = \e^K$ if $(K - \a)/ \g < \a/(\bar \g - \g).$ This translates into \eqref{holder}. 
\end{proof}
 %
  


 By comparison, Theorem \ref{th:instability} implies that there holds \eqref{holder} {\it for any $\b > 0,$} 
 and also the stronger estimate
 $$ \sup_{0 < \e < 1} \frac{ \ln |u(T_\star(\e))|_{L^\infty} }{\ln \big| \ln |u(0)|_{L^\infty} \big|}  \geq - P,$$
 where $P$ depends on $K,$ with $|u(0)|_{L^\infty} = \e^K.$

\section{Application: a new proof of sharp G\r{a}rding inequalities} \label{sec:Fef-pho}

We prove here the following G\r{a}rding inequalities:

\begin{theo} \label{th:G} For all $m \in \R,$ for all scalar symbol $a \in S^m(\R^d \times \R^d)$ such that $\Re e \, a \geq 0,$ for all $0 < \theta < 1,$ for some $c > 0,$  
 there holds for all $u \in H^m(\R^d)$ the lower bound 
$$ %
  \Re e \, \big( \op(a) u, u \big)_{L^2} + c |u |_{H^{(m-\theta)/2}}^2 \geq 0.
  $$ %
\end{theo}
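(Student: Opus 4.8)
The plan is to deduce the sharp G\r{a}rding inequality from the Approximation Lemma by a classical exponentiation trick, reducing to the trivial case of a \emph{positive} semigroup. The key observation is that $\Re e\, a \ge 0$ means that the symbol $M := -a$ has $\Re e\, M \le 0$, so the finite-dimensional flow $\exp(tM)$ — which here is just the scalar exponential $e^{-ta}$ — is bounded by $1$ pointwise in $(x,\xi)$; at the same time, for the flow $\exp(t\,\op(M))$ one would like to assert $\big|\exp(t\,\op(M))\big|_{L^2\to L^2}\le 1$, which is precisely a reformulation of $\Re e\,(\op(a)u,u)\ge 0$ after differentiating $t\mapsto \tfrac12|e^{-t\op(a)}u|_{L^2}^2$ at $t=0$. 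So the two statements are essentially equivalent, and the content is that the Approximation Lemma lets us replace the unknown operator flow by the harmless symbolic flow, at the cost of an $\e$- (or order-) loss that is controlled.

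First I would transfer everything to the semiclassical/Weyl setting that the paper says is needed: since $a\in S^m$ is of order $m$, not $0$, one cannot directly apply Lemma~\ref{lem:tildeS} to $M=-a$. I would instead work with $\langle\xi\rangle^{-m}a$ or, following the remark in the introduction about a Weyl-quantized version of the Approximation Lemma in which ``powers of $\e$ are replaced with gains in the orders of the operators'', set up the flow $\exp(t\,\op(-a))$ and approximate its operator by $\op(\Sigma(0;t))$ where $\Sigma = e^{-ta} + (\text{correctors})$, the $q$-th corrector now lying in $S^{m-q(1-\cdot)}$ or similar, so that the remainder $\rho$ gains an order. The catastrophic-growth issue of Section~\ref{sec:Duhamel} does not even arise here because $\g = \sup_{x,\xi}\Re e\,(-a)\le 0$, so all the bounds \eqref{bd:Sq} hold with a bounded (indeed decaying) exponential; a finite number $q_0$ of correctors suffices, and $q_0$ can be taken large enough that the order gain exceeds $\theta$.

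Next I would run the energy estimate. Let $u\in H^m$, set $v(t) := \exp(t\,\op(-a))u$ and $w(t) := \exp(t\,\op(\Re e\, a))$-type comparison; more simply, write
\be
\tfrac12 \frac{d}{dt}\big|\,\op(\Sigma(0;t))u\,\big|_{L^2}^2 = \Re e\,\big(\d_t\op(\Sigma)u,\ \op(\Sigma)u\big)_{L^2},
\ee
and use \eqref{tildeS} (in its Weyl, order-graded form) to write $\d_t\op(\Sigma) = \op(-a)\op(\Sigma) + \op(\rho)$ with $\rho$ of order $m-\theta$ (after choosing $q_0$ accordingly). Because the principal symbol $-a$ satisfies $\Re e\,(-a)\le 0$, the leading term $\Re e\,(\op(-a)\op(\Sigma)u,\op(\Sigma)u)$ is controlled — this is where one needs either a bootstrap/induction on the order, or, at the very bottom of the induction, the elementary fact that $\op$ of a symbol with $\Re e \le 0$ of order $\le 0$ is bounded, combined with Calder\'on--Vaillancourt — and the remainder contributes $\big|(\op(\rho)u,\op(\Sigma)u)\big| \lesssim |u|_{H^{(m-\theta)/2}}|u|_{H^{(m-\theta)/2}}$ by splitting the order $m-\theta$ symmetrically, $\op(\rho) = \op(\langle\xi\rangle^{(m-\theta)/2})^*\,B\,\op(\langle\xi\rangle^{(m-\theta)/2})$ with $B$ bounded on $L^2$. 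Integrating from $0$ to a fixed time $t_0>0$ and comparing with $|u|_{L^2}=|\op(\Sigma(0;0))u|_{L^2}$ at $t=0$, evaluating $\frac{d}{dt}\big|_{t=0}$, and using $\op(\Sigma(0;0))=\Id$ and $\d_t\Sigma(0;0)=-a$ (plus corrector terms of lower order), one extracts $\Re e\,(\op(a)u,u)_{L^2} \ge -c|u|_{H^{(m-\theta)/2}}^2$, which is the claim.

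The main obstacle I expect is not the energy estimate itself but setting up the \emph{Weyl / order-graded} version of the Approximation Lemma cleanly: one must check that with $\g \le 0$ the correctors $S_q$ genuinely gain $q$ orders (the $\sharp_q$ bracket lowers the order by $q$, and composition in the Weyl calculus does likewise), that $q_0$ finitely many of them push the remainder $\rho$ into $S^{m-\theta}$ for the prescribed $\theta<1$ (note $\theta$ can be \emph{any} value in $(0,1)$, so one needs the gain to be at least $\theta$, i.e.\ $q_0\ge 1$ already gives a full order, more than enough — but one must be careful the constant $c$ then depends on a norm $\|a\|_{C(d)}$ with $C(d)$ finite, consistent with the theorem statement), and that the error estimate analogous to \eqref{tilde-remainder} survives without the $\e\to 0$ limit, i.e.\ purely as an order statement. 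Once that bookkeeping is in place, the positivity of the principal part does all the work and the inequality follows by the energy argument above. In the matrix-valued case one only gets a half-order gain, hence the footnote's remark that $\theta\le 1/2$ there — but for the scalar theorem as stated, the full range $0<\theta<1$ is reached because the scalar Weyl symbol calculus is commutative at leading order.
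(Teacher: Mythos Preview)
Your proposal has a genuine circularity. The crux of your energy estimate is the step ``Because the principal symbol $-a$ satisfies $\Re e\,(-a)\le 0$, the leading term $\Re e\,(\op(-a)\op(\Sigma)u,\op(\Sigma)u)$ is controlled.'' But controlling $\Re e\,(\op(-a)v,v)$ from above by zero (or by a lower-order remainder) \emph{is} the G\r{a}rding inequality you are trying to prove. Calder\'on--Vaillancourt gives only $|\op(-a)v|_{L^2}\lesssim |v|_{L^2}$, not a sign. Equivalently: from $|e^{-ta}|\le 1$ pointwise you cannot conclude $|\op(e^{-ta})|_{L^2\to L^2}\le 1$; that implication is again G\r{a}rding. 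So neither branch of your ``bootstrap/induction on the order, or the elementary fact that $\op$ of a symbol with $\Re e\le 0$ is bounded'' closes.

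The paper breaks this circularity by a different mechanism. First, a Littlewood--Paley reduction replaces $a$ by $a_j:=\phi_j a+2^{-j\theta}$, which is \emph{strictly} positive, $a_j\ge 2^{-j\theta}$. Second, a convexity identity (differentiating twice) shows that $(a_j^{\rm w}u_j,u_j)\ge 0$ follows from $|\Phi(-t)u_j|\le |u_j|$ for \emph{some} $t>0$. Third---and this is the point---one does not try to make $\op(e^{-ta_j})$ a contraction; one makes it \emph{small}. Waiting until $t_\star\sim j\,2^{j\theta}$ forces $e^{-t_\star a_j}\le e^{-j\t_\star}$, and then Calder\'on--Vaillancourt (which bounds the operator norm by a symbol norm, not by $1$) yields $|\op(e^{-t_\star a_j})|\ll 1$. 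The whole difficulty migrates to the correctors $S_q$, which grow polynomially in $t$ and must still be small at $t_\star$. That is where the scalar hypothesis truly enters: for nonnegative scalar $a$ one has the Glaeser-type bound $|Da_j|\lesssim a_j^{1/2}$ on $\{a_j<h\}$, which forces the correctors to grow like $t^{q+C(d)}$ rather than $t^{2q+C(d)}$, and this is exactly what makes the balance $t_\star\sim 2^{j\theta}$ with $\theta<1$ close. Your explanation that $\theta<1$ comes from ``the scalar Weyl symbol calculus is commutative at leading order'' is not the operative reason; commutativity makes $S_1\equiv 0$, but the higher correctors survive, and it is the square-root bound on $|Da_j|$ that controls them.
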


The constant $c$ depends on $\theta$ and on a large number $r(\theta)$ of derivatives of $a,$ with $c \to \infty$ and $r \to \infty$ as $\theta \to 1.$ This highlights two shortcomings of Theorem \ref{th:G} and its proof: we do not  
handle the endpoint case $\theta = 1$ corresponding to the classical G\r{a}rding inequality (first proved by H\"ormander \cite{H} for scalar symbols, and extended to systems by Lax and Nirenberg \cite{LaxN}), and we require a lot of smoothness for $a.$

Nonetheless our proof may have some interest in its own right. First, it completely differs from the classical proofs, which  go either by reduction to the elliptic case (see for instance the proofs of Theorem 4.32 in \cite{Z} or Theorem 7.12 in \cite{DS}), or by use of the Wick quantization (see for instance the proof of Theorem 1.1.26 in \cite{Le}). Second, it lends itself to partial extensions, in particular to the matrix case, as discussed in Section \ref{sec:matrix}. Finally, it allows to view the Approximation Lemma \ref{lem:tildeS} as a refinement of G\r{a}rding's inequality, in the sense that Lemma \ref{lem:tildeS} implies G\r{a}rding (as shown by the proof below), and also implies stronger semigroup bounds than G\r{a}rding, as we saw in Section \ref{sec:ex}.

The proof of Theorem \ref{th:G} is given in Sections \ref{sec:red} to \ref{sec:4}. The key idea of the proof is the reformulation, in Section \ref{sec:red3}, of the G\r{a}rding inequality as an upper bound for the backwards flow of $a^{\rm w}$ (Weyl quantization). This is exploited in Section \ref{sec:approx} where we approximate the flow of $a^{\rm w},$ following the ideas of Section \ref{sec:Duhamel}. Estimates conclude the proof in Section \ref{sec:4}. 

 \subsection{First step: reductions} \label{sec:red}

We denote $a^{\rm w}$ the pseudo-differential operator in Weyl quantization (see definition \eqref{def:weyl} in the Appendix) with symbol $a.$ There holds  
$\op(a) = a^{\rm w} + \op(\rho),$
where $\rho \in S^{m-1},$ with norms bounded by norms of $a.$ In particular, there holds the bound $(\op(\rho) u, u)_{L^2} \leq \| a \|_{C(d)} |u|_{H^{(m-1)/2}}^2,$ for all $u \in H^{m-1}.$ Thus we may switch to a Weyl quantization. The adjoint of $a^{\rm w}$ is $(\bar a)^{\rm w},$ so that $\Re e \, (a^{\rm w} u, u)_{L^2} = \big( (\Re e \, a)^{\rm w} u, u)_{L^2}.$ Thus it suffices to handle the case $a \in \R.$ The goal is now to prove 
 \be \label{0:g} 
 \begin{aligned}  ( a^{\rm w} u, u)_{L^2} & + c |u|_{H^{(m-\theta)/2}}^2 \geq 0, \\ &  \mbox{for any real nonnegative $a \in S^m,$ some $c > 0,$ all $u \in H^{m}.$} \end{aligned}
 \ee
Let $\Lambda = \op(\langle \cdot \rangle),$ with $\langle \xi \rangle := (1 + |\xi|^2)^{1/2},$ and $a_0 := \langle \xi \rangle^{-m} a \in S^0_{1,0}.$ Consider the operator $\Lambda^{m/2} a_0^{\rm w} \Lambda^{m/2}.$ Its principal symbol is $a.$ In Weyl quantization, its subprincipal symbol is 
$$ %
\frac{1}{2i} \{ \langle \xi \rangle^{m/2},  a_0 \langle \xi \rangle^{m/2} \} + \frac{1}{2i} \langle \xi \rangle^{m/2} \{ a_0, \langle \xi \rangle^{m/2} \} = 0.
$$ %
  Hence, by composition of operators (see \eqref{compo:w} and \eqref{composition:w}), there holds
$ \Lambda^{m/2} a_0^{\rm w} \Lambda^{m/2} = a^{\rm w} + R_{m-2},$
 where $(R_{m-2} w, w)_{L^2} \lesssim \| a \|_{C(d)} |w|_{H^{(m-2)/2}}^2,$ for all $w \in H^{(m-2)/2}.$  Since $\Lambda$ is $L^2$-self-adjoint, $(a^{\rm w} u, u)_{L^2} =  (a_0^{\rm w} \Lambda^{m/2} u, \Lambda^{m/2} u)_{L^2} +  (R_{m-2} u, u)_{L^2}.$
From the above, it appears that it is sufficient to prove \eqref{0:g} in the case $m = 0.$

Let $(\phi_j)_{j \geq 0}$ and $(\psi_j^2)_{j \geq 0}$ be two dyadic Littlewood-Paley decompositions, such that $\phi_j \equiv \phi_j \psi_j^2.$ Then there holds  (this is Claim 2.5.24 in \cite{Le})
\be \label{lp} \Big| (a^{\rm w} u, u)_{L^2} -  \sum_{j \geq 0} \big( (\phi_j a)^{\rm w} \psi_j^{\rm w} u,\, \psi_j^{\rm w} u\big)_{L^2} \Big| \leq c |u|_{H^{-1}}^2,\ee
 where $c$ depends on norms of $a,$ and $\psi_j^{\rm w}$ is the Fourier multiplier $\op(\psi_j).$ Thus it suffices to prove
\be \label{0.g}
 \big( a_j^{\rm w} u_j, u_j\big)_{L^2} + c 2^{- j \theta} |u_j|_{L^2}^2 \geq 0, \qquad a_j = \phi_j(\xi) a(x,\xi), \,\, u_j := \psi_j^{\rm w} u,
\ee 
for some $c > 0$ independent of $j,$ for all $j.$ In particular, we may change $a_j$ into $b_j = a_j + 2^{-j \theta}.$ Indeed, if we manage to prove \eqref{0.g} with $b_j^{\rm w}$ in place of $a_j^{\rm w},$ then \eqref{0.g} with holds for $a_j^{\rm w}$ as well, with the constant $c + 1.$ For notational simplicity, we keep notation $a_j$ to denote $a_j + 2^{-j \theta};$ in other words we now  assume $a_j \geq 2^{- j \theta},$ for all $j.$

We note moreover that low-frequency terms can be absorbed in the remainder, via
  $$ \sum_{0 \leq j \leq j_0} \big( a_j^{\rm w} u_j, u_j\big)_{L^2} \lesssim \| a \|_{0} 2^{j_0 \theta} |u|_{H^{-\theta/2}}^2,$$
  a consequence of the $L^2$ continuity of the $a_j$ (see \eqref{continuity:w}). 
  
 Finally, up to dividing by $\| a \|_{C(\theta)},$ where $C(\theta)$ is large enough, we may assume that a large number of norms of $a$ are bounded by 1. 

\smallskip
  
   In accordance with the above, in the rest of this proof a family of symbols $(a_j)$ is given, such that 
\be \label{assume}
 \mbox{$a_j$ is real}, \,\, a_j \in S^0, \,\, a_j \geq 2^{-j \theta}, \,\, \mbox{supp}\,\d_{x,\xi}^\a a_j \subset \{ (x,\xi),\ \,\, |\xi| \sim 2^j \}, \,\, \| a_j \|_{C(\theta)} \leq 1,
\ee 
with $C(\theta)$ possibly large, and we undertake to find $j_0 \in \N$ such that for all $u \in L^2,$ all $j \geq j_0,$ 
\be \label{00}
 \big( a_j^{\rm w} u_j, u_j\big)_{L^2} \geq 0, \qquad u_j := \psi_j^{\rm w} u.
 \ee
In the fourth condition in \eqref{assume}, $\mbox{supp}\, \d_{x,\xi}^\a a_j$ denotes support of $\d_{x,\xi} a_j$ for any $|\a| > 0;$ by $\{ (x,\xi), \,\, |\xi| \sim 2^j\}$ we mean the cartesian product of $\R^d_x$ with some annulus $\{ \xi \in \R^d, \,\, A 2^{j} \leq |\xi| \leq B 2^j\},$ with $0 < A < B$ independent of $j.$  We may simply think of $a_j$ as being $\phi_j a + 2^{-j \theta},$ where $a$ is a given symbol in $S^0,$ real and nonnegative, and $(\phi_j)$ is a Littlewood-Paley decomposition of unity.  

\subsection{Second step: reformulation in terms of the flow of $a_j^{\rm w}$} \label{sec:red3} 

By the Calder\'on-Vaillancourt theorem \eqref{continuity:w}, the operator $a_j^{\rm w}$ is linear bounded $L^2 \to L^2.$ Let $\Phi$ be the flow of $a_j^{\rm w}:$ $\Phi(t) = \exp(t a_j^{\rm w}),$ meaning that for all $w \in L^2,$ for all $t \in \R,$ $\Phi(t) w$ is the unique solution in $L^2$ to the initial-value problem
$$ %
  y' = a_j^{\rm w} y, \quad y(0) = w.
$$ %
 We compute, for $t \in \R$ and $w \in L^2,$ 
 \be \label{flow2} \frac{1}{2} \frac{d}{dt} \big| \Phi(t) w\big|_{L^2}^2 =  \big( a_j^{\rm w}\Phi(t) w, \Phi(t) w \big)_{L^2},\ee
 and
\be \label{flow3} 
 \begin{aligned}
 \frac{1}{2} \frac{d}{dt} \big(a_j^{\rm w} \Phi(t) w, \Phi(t) w \big)_{L^2} 
 & =  \big| a_j^{\rm w} \Phi(t) w\big|_{L^2}^2 \geq 0,
 \end{aligned}
 \ee
 so that the right-hand side in \eqref{flow2} is a growing function of time.
Integrating \eqref{flow2} from $0$ to $t,$ we find%
$$ %
 |\Phi(t) w|_{L^2}^2 - | w |_{L^2}^2  = 2 \int_0^{t}  \big( a_j^{\rm w} \Phi(t') w, \Phi(t') w \big)_{L^2} \, dt', 
$$ %
 hence with \eqref{flow3}, the inequality 
 \be \label{flow5}
 |\Phi(t) w|_{L^2}^2 - | w |_{L^2}^2 \leq 2 t \, \big( a_j^{\rm w} \Phi(t) w, \Phi(t) w\big)_{L^2}, \quad t \geq 0, w \in L^2.
  \ee
For all $t,$ the operator $\Phi(t)$ is onto $L^2$ (indeed, there holds $\Id_{{\mathcal L}(L^2)} = \Phi(t) \Phi(-t)$), so that, for $u_j$ defined in \eqref{00}, we can write $u_j = \Phi(t) w$ with $w = \Phi(-t) u_j,$ and \eqref{flow5} becomes
$$ %
  | u_j |_{L^2}^2 - | \Phi(-t) u_j |_{L^2}^2 \leq 2 t \, \Re e \, \big( a_j^{\rm w} u_j, u_j\big)_{L^2}.
$$ %
 Thus, in order to prove \eqref{00}, it is sufficient to show that for some $j_0 \geq 0,$ all $u \in L^2,$ all $j \geq j_0,$ for some $t > 0,$ there holds
 \be \label{ineq:to-prove}
  \big| \Phi(-t)  u_j \big|_{L^2} \leq | u_j|_{L^2},
  \ee  
 for $a$ satisfying \eqref{assume}, with $u_j = \psi_j^{\rm w} u,$ where $(\psi_j)$ is a Littlewood-Paley partition of unity such that \eqref{lp} holds.
 
  At this stage we have reformulated the G\r{a}rding inequality \eqref{00} into the upper bound \eqref{ineq:to-prove} for the backward flow $\Phi(-t)$ of $a_j^{\rm w}.$ 

\subsection{Third step: approximation of the flow of $a_j^{\rm w}$} \label{sec:approx}

We denote $S_0 := e^{-t a_j},$ and define correctors $(S_q)_{1 \leq q \leq q_0}$ by
\be \label{def:Sq} \d_t S_q = - a_j S_q - \sum_{\begin{smallmatrix} q_1 + q_2 = q \\ 0 < q_1 \end{smallmatrix} } a_j \diamond_{q_1} S_{q_2}, \qquad S_q(0) = 0,
\ee
with notation $\diamond$ introduced in \eqref{diamond}.
\begin{lem} \label{lem:Sa} For $\Sigma  := \sum_{0 \leq q \leq q_0-1} S_q,$ for $\dsp{ q_0:= 1 + c_d \Big[ \frac{\theta}{1 - \theta}\Big]},$ for some $c_d > 0$ depending only on $d,$ there holds
 \be \label{approx:GFP}
 \d_t \Sigma^{\rm w} = - a_j^{\rm w} \Sigma^{\rm w} + \rho(t)^{\rm w},
 \ee
  where, for any $s \in \R,$ 
   $$| \rho(t)^{\rm w} w|_{H^s} \lesssim \s(t) |w|_{H^{s-q_0}}, \qquad \s(t) := \sum_{0 \leq q \leq q_0 - 1} \| S_q(t) \|_{q_0 - q + C(d)},$$
for some $C(d) > 0$ depending only on $d.$ 
 \end{lem}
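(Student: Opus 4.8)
The plan is to mimic the computation in the proof of the Approximation Lemma \ref{lem:tildeS}, but now in Weyl quantization and keeping track of \emph{orders of operators} rather than powers of $\eps$. First I would recall the composition formula for Weyl symbols (the analogue of \eqref{compo:e}--\eqref{composition:e}, namely \eqref{compo:w}--\eqref{composition:w}): for symbols $b_1 \in S^{m_1}$, $b_2 \in S^{m_2}$ one has $b_1^{\rm w} b_2^{\rm w} = (b_1 \# b_2)^{\rm w}$ with $b_1 \# b_2 = \sum_{q < q_0} b_1 \diamond_q b_2 + R_{q_0}(b_1,b_2)$, where $\diamond_q$ is the homogeneous term of order $q$ in the Moyal expansion (the map introduced at \eqref{diamond}, of the schematic form $\diamond_q \sim \sum_{|\a|+|\b|=q} c_{\a\b}\,\d_\xi^\a \d_x^\b b_1 \cdot \d_x^\a \d_\xi^\b b_2$ up to the usual Weyl-symmetrization) and the remainder $R_{q_0}(b_1,b_2)$, when $b_1,b_2$ are supported in $|\xi| \sim 2^j$, maps $H^s \to H^{s-q_0}$ with operator bound controlled by $\| b_1 \|_{r_1} \| b_2 \|_{r_2}$ for a suitable finite number of derivatives. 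Then, exactly as at \eqref{compo:crux}, I would write for each $q$
\[
a_j^{\rm w} S_q^{\rm w} = (a_j S_q)^{\rm w} + \sum_{1 \le q_1 \le q_0 - 1 - q} (a_j \diamond_{q_1} S_q)^{\rm w} + (R_{q_0 - q}(a_j, S_q))^{\rm w}.
\]

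Next I would sum over $0 \le q \le q_0 - 1$ and compare with $\d_t \Sigma^{\rm w}$. By the defining equations \eqref{def:Sq}, $\d_t \Sigma = -a_j \Sigma - \sum_{\substack{0 \le q_1+q_2 \le q_0-1\\ 0<q_1}} a_j \diamond_{q_1} S_{q_2}$, so that $\d_t \Sigma^{\rm w} + a_j^{\rm w}\Sigma^{\rm w}$ telescopes and the only surviving terms are, on one side, the remainders $\sum_q (R_{q_0-q}(a_j,S_q))^{\rm w}$, and, on the other side, the "overhanging" diamond terms $a_j \diamond_{q_1} S_{q_2}$ with $q_1 + q_2 \ge q_0$ (which appear in $\d_t \Sigma$ but not in the composition of $a_j^{\rm w}$ with $\Sigma^{\rm w}$ truncated at order $q_0-1-q$). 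Collecting everything gives \eqref{approx:GFP} with
\[
\rho(t)^{\rm w} = \sum_{0 \le q \le q_0 - 1} \bigl(R_{q_0 - q}(a_j, S_q)\bigr)^{\rm w} - \sum_{\substack{q_1 + q_2 \ge q_0\\ 0 < q_1,\ q_2 \le q_0-1}} (a_j \diamond_{q_1} S_{q_2})^{\rm w},
\]
and since $a_j \in S^0$ with $\mathrm{supp}\,\d a_j \subset \{|\xi|\sim 2^j\}$ and $S_{q_2} \in S^{0}$ supported in the same shell, the symbol $a_j \diamond_{q_1} S_{q_2}$ is of order $-(q_1+q_2) \le -q_0$ on that shell, hence its Weyl quantization maps $H^s \to H^{s-q_0}$, with norm bounded by $\|a_j\|_{C(d)}\,\|S_{q_2}\|_{q_0+C(d)}$. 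The remainder terms $R_{q_0-q}(a_j,S_q)$ map $H^s \to H^{s-(q_0-q)} \hookrightarrow H^{s-q_0}$ on the shell (again because the symbols live in $|\xi|\sim 2^j$, one can trade the extra $q$ derivatives for a uniform constant, or, cleanly, bound $\|R_{q_0-q}(a_j,S_q)\|_{H^s\to H^{s-q_0}} \lesssim \|a_j\|_{\cdot}\|S_q\|_{q_0-q+C(d)}$), which is precisely the claimed bound $|\rho(t)^{\rm w} w|_{H^s} \lesssim \s(t) |w|_{H^{s-q_0}}$ with $\s(t) = \sum_{q} \|S_q(t)\|_{q_0 - q + C(d)}$.

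The one genuinely structural point, rather than the main \emph{difficulty}, is the choice of $q_0$: one needs the final gain of $q_0$ derivatives, combined with the growth of the correctors $S_q$ in $t$, to beat the loss coming from working at frequencies $2^j$ and over times $t$ of size $\sim 2^{j(1-\theta)}$ (this is what will be needed in Section \ref{sec:4} to push $|\Phi(-t)u_j| \le |u_j|$ through). A short computation with the Littlewood–Paley localization shows that $q_0$ must exceed a fixed multiple of $\theta/(1-\theta)$, which is exactly $q_0 = 1 + c_d[\theta/(1-\theta)]$; at this stage of the argument, though, $q_0$ can be taken to be \emph{any} fixed integer, and the specific value is only invoked later. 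So for the purposes of the lemma the proof is the composition-and-telescoping bookkeeping above, and the expected main obstacle — if any — is purely notational: correctly organizing the triangular system \eqref{def:Sq} and matching indices so that the telescoping is exact, together with verifying that $S_q \in S^{-q}$ on the shell $|\xi|\sim 2^j$ (which follows by induction from \eqref{def:Sq}, since $a_j \in S^0$ and $a_j \diamond_{q_1} S_{q_2}$ gains $q_1$ orders, so $\d_t S_q + a_j S_q$ lies in $S^{-q}$, hence so does $S_q$ by Duhamel, exactly as in \eqref{bd:S1}--\eqref{bd:Sq}).
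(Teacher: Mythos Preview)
Your approach is exactly the paper's: redo the telescoping computation of Lemma~\ref{lem:tildeS} in Weyl quantization, and read off the remainder as a sum of composition tails $R_{q_0-q}(a_j,S_q)\in S^{-q_0}$, bounded via \eqref{sobolev:w}--\eqref{composition:w}. The paper's own proof is one sentence to this effect.

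There is, however, a bookkeeping slip in your identification of $\rho$. You claim that ``overhanging'' diamond terms $a_j\diamond_{q_1}S_{q_2}$ with $q_1+q_2\ge q_0$ survive because they ``appear in $\d_t\Sigma$ but not in the composition.'' They do not appear in $\d_t\Sigma$: you yourself wrote $\d_t\Sigma=-a_j\Sigma-\sum_{q_1+q_2\le q_0-1,\,q_1>0}a_j\diamond_{q_1}S_{q_2}$, so the index range is $q_1+q_2\le q_0-1$. And in your expansion of $a_j^{\rm w}S_{q_2}^{\rm w}$ you truncated at $q_1\le q_0-1-q_2$, i.e.\ also $q_1+q_2\le q_0-1$. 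The two index sets coincide, the cancellation is exact, and
\[
\rho(t)=\sum_{0\le q\le q_0-1}R_{q_0-q}(a_j,S_q)
\]
with no second sum. This is precisely the paper's formula. Your spurious terms are harmless for the final bound (you estimate them correctly, and adding zero changes nothing), but the sentence explaining where they come from is wrong and should be deleted.
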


 The reason for our choice of $q_0$ will be apparent after Lemma \ref{lem:P}. 

\begin{proof} By exactly the same computations as in the proof of the Approximation Lemma \ref{lem:tildeS}, we find that \eqref{approx:GFP} holds with $\rho =  \sum_{0 \leq q \leq q_0-1} R^{\rm w}_{q_0 - q }(a_j, S_q) \in S^{-q_0}.$ The bound for $\rho^{\rm w}$ derives from \eqref{sobolev:w}. 
\end{proof}

\begin{cor} \label{cor:duh} For some $C, C' > 0$ depending only on $d,$ so long as \be \label{t:j} C t 2^{-j q_0} | \s |_{L^\infty(0,t)} < 1/2,
\ee
there holds the bound%
$$%
  |\Phi(-t) u_j|_{L^2} \leq  | \Sigma(t)^{\rm w} u_j|_{L^2} + C' t^2 2^{-j q_0} |\s|_{L^\infty(0,t)} \big| \| \Sigma \|_{0} \big|_{L^\infty(0,t)} |u_j|_{L^2}.
$$ 
 \end{cor}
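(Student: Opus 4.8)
The plan is to mimic the proof of Theorem \ref{th:duh} in the new setting, where $\Phi(-t)$ plays the role of the exact solution operator, $\Sigma(t)^{\rm w}$ from Lemma \ref{lem:Sa} plays the role of the approximate solution operator $\op_\e(\Sigma)$, and the remainder $\rho(t)^{\rm w}$ — which gains $q_0$ derivatives, hence a factor $2^{-jq_0}$ on the frequency band where $u_j$ lives — plays the role of $\e\op_\e(\rho)$. First I would set up the Duhamel representation: since $y(t) := \Phi(-t) u_j$ solves $y' = -a_j^{\rm w} y$ with $y(0) = u_j$, and since by Lemma \ref{lem:Sa} the operator $\Sigma(t)^{\rm w}$ satisfies $\d_t \Sigma^{\rm w} = -a_j^{\rm w}\Sigma^{\rm w} + \rho^{\rm w}$ with $\Sigma(0)^{\rm w} = S_0(0)^{\rm w} = \Id$ (up to a harmless $O(2^{-j})$ term from the correctors vanishing at $t=0$), Duhamel's formula against the flow $\Phi$ gives
\be \label{duh-cor}
\Phi(-t) u_j = \Sigma(t)^{\rm w} u_j - \int_0^t \Phi(t' - t)\, \rho(t')^{\rm w}\, \Sigma(t')^{\rm w} u_j \, dt',
\ee
or more precisely the analogue obtained exactly as in the proof of Theorem \ref{th:duh}, inverting $\Id + \rho_0$ where $\rho_0$ is the Volterra-type integral operator with kernel built from $\rho^{\rm w}$.

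Second I would estimate each piece. The key point is that $u_j = \psi_j(D) u$ is frequency-localized at $|\xi| \sim 2^j$, and $\Phi(t')$ — the flow of $a_j^{\rm w}$, whose symbol is supported where $|\xi| \sim 2^j$ — preserves this localization up to rapidly-decaying tails (or one inserts a fattened Littlewood-Paley cutoff), so that throughout the integral one may replace $|\cdot|_{H^{s-q_0}}$ norms by $2^{-jq_0}|\cdot|_{L^2}$ norms. Hence by Lemma \ref{lem:Sa},
\be
| \rho(t')^{\rm w} \Sigma(t')^{\rm w} u_j|_{L^2} \lesssim 2^{-j q_0}\, \s(t')\, \| \Sigma(t')\|_0 \, |u_j|_{L^2},
\ee
using the Calder\'on-Vaillancourt bound \eqref{continuity:w} for $\Sigma(t')^{\rm w}$, while $\Phi(t'-t)$ acting on this frequency band has norm controlled by $1$ (this is where the self-improving inequality structure of \eqref{flow2}--\eqref{flow3}, or simply the a priori Gronwall bound on $|\Phi|$, is used, and it is precisely the smallness condition \eqref{t:j} that makes the Volterra inversion $\Id + \rho_0 \mapsto (\Id+\rho_0)^{-1}$ legitimate with operator norm $\leq 2$). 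Integrating in $t'$ over $[0,t]$ produces the factor $t$; the extra factor $t$ in the claimed $t^2$ comes from the Volterra inversion, which converts the single-integral remainder into a Neumann series whose sum contributes one more power of $t$ (exactly as $\e R$ in Theorem \ref{th:duh} replaces the single $\e\op_\e(\rho)$ term). Collecting everything yields
\be
|\Phi(-t) u_j|_{L^2} \leq | \Sigma(t)^{\rm w} u_j|_{L^2} + C' t^2 2^{-j q_0} |\s|_{L^\infty(0,t)} \big| \|\Sigma\|_0 \big|_{L^\infty(0,t)} |u_j|_{L^2}.
\ee

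The main obstacle I anticipate is the bookkeeping of frequency localization: the symbol $\Sigma(t')$ is a symbol of order $0$ but \emph{not} compactly supported in $\xi$ in the same annulus as $a_j$ (the correctors $S_q$ involve $\d_x^\a \d_\xi^\a$ of products, which spreads support only mildly but does produce lower-order-decaying tails), so one must argue carefully that $\rho(t')^{\rm w}\Sigma(t')^{\rm w}$ applied to the frequency-localized $u_j$ still gains the full $q_0$ derivatives relative to $|u_j|_{L^2}$ — equivalently, that it suffices to bound $\rho^{\rm w}$ on the range of a fixed fattened $\widetilde\psi_j(D)$. This is handled by the standard observation that $\rho^{\rm w}\Sigma^{\rm w}(1 - \widetilde\psi_j(D))\psi_j(D)$ is smoothing of infinite order with negligible norm, so only $\rho^{\rm w}\Sigma^{\rm w}\widetilde\psi_j(D)\psi_j(D)$ matters, and on that piece the $H^{s-q_0} \to 2^{-jq_0} L^2$ conversion is exact. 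A secondary, purely cosmetic, point is that $\Sigma(0)^{\rm w} = \Id + O(2^{-j})$ rather than exactly $\Id$, which is absorbed into the remainder term without affecting the stated form.
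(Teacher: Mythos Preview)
Your overall plan --- reproduce Theorem~\ref{th:duh} in this setting --- is correct, but the execution has two concrete errors that, as written, make the argument circular.

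First, the Duhamel identity you display is wrong: differentiating $t'\mapsto \Phi(t'-t)\Sigma(t')^{\rm w} u_j$ shows the integrand should be $\Phi(t'-t)\rho(t')^{\rm w} u_j$, with no $\Sigma(t')^{\rm w}$ sandwiched inside. More seriously, any representation keeping the exact flow $\Phi$ on the right-hand side is useless here, because bounding the integral then requires $|\Phi(t'-t)|_{L^2\to L^2}\lesssim 1$ for $t'\in[0,t]$ --- precisely the backward-flow bound we are trying to prove. A Gronwall bound gives only $e^{Ct}$, which destroys the estimate, and invoking \eqref{flow2}--\eqref{flow3} is circular. The paper avoids this entirely by using the \emph{approximate} operator $\Sigma(t-t')^{\rm w}$ inside the integral, never $\Phi$: the representation is
$$\Phi(-t)u_j=\Sigma(t)^{\rm w} u_j-\int_0^t\Sigma(t-t')^{\rm w}\sum_{k\ge 0}(-1)^{k+1}\rho_0^k\big(\rho(\cdot)^{\rm w} u_j\big)(t')\,dt',\qquad (\rho_0 w)(t):=\int_0^t\rho(t-t')^{\rm w} w(t')\,dt',$$
and $|\Sigma(t-t')^{\rm w}|_{L^2\to L^2}$ is then controlled by $\|\Sigma\|_0$ via Calder\'on--Vaillancourt.

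Second, your frequency-localization discussion is aimed at the wrong target. In the correct representation $\rho^{\rm w}$ acts on $u_j$ directly (not on $\Sigma^{\rm w} u_j$), and the iterated bound
$$\big|\rho_0^k(\rho^{\rm w} u_j)(t)\big|_{L^2}\lesssim\big(t\,2^{-jq_0}|\s|_{L^\infty(0,t)}\big)^{k+1}|u_j|_{L^2}$$
follows simply by applying the $H^{s-q_0}\to H^s$ mapping property of $\rho^{\rm w}$ (Lemma~\ref{lem:Sa}) a total of $k+1$ times, then using the single conversion $|u_j|_{H^{-(k+1)q_0}}\sim 2^{-j(k+1)q_0}|u_j|_{L^2}$. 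No preservation of localization under $\Sigma^{\rm w}$ or $\Phi$ is needed, so the fattened-cutoff argument is beside the point. (Also, $\Sigma(0)=\Id$ exactly, since $S_q(0)=0$ for $q\ge 1$ by \eqref{def:Sq}.) With these corrections the proof is the paper's: the outer integral contributes one factor of $t$ and $|\|\Sigma\|_0|_{L^\infty(0,t)}$; the Neumann series, convergent under \eqref{t:j}, contributes $\lesssim t\,2^{-jq_0}|\s|_{L^\infty(0,t)}$, yielding the $t^2$.
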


\begin{proof} We follow the proof of Theorem \ref{th:duh}, but here we do not seek here a representation of the whole flow, only of its action on $u_j.$ We deduce from Lemma \ref{lem:Sa} the representation
\be \label{rep:Phi} \Phi(-t) u_j = \Sigma(t)^{\rm w} u_j - \int_0^t \Sigma(t -t')^{\rm w} \sum_{k \geq 0} (-1)^{k+1} \rho_0^k \Big(\rho(\cdot)^{\rm w} u_j\Big)(t') \, dt',\ee
where %
$\dsp{(\rho_0 w)(t) := \int_0^t \rho(t - t')^{\rm w} w(t') \, dt'.}$
 By Lemma \ref{lem:Sa} and a straightforward induction,  %
$$ \big|\rho_0^k \big( \rho(\cdot)^{\rm w} u_j\big)(t)\big|_{L^2} \lesssim \big(t 2^{-j q_0} |\s|_{L^\infty(0,t)} \big)^{k+1} |u_j|_{L^2},$$
using the frequency localization of $u_j.$ 
From there we deduce that the sum in \eqref{rep:Phi} converges if $t 2^{-j q_0} |\s|_{L^\infty(0,t)}$ is small enough, depending only on $d.$ %
\end{proof}

Recall that the goal is to prove \eqref{ineq:to-prove}. According to Corollary \ref{cor:duh}, it is sufficient to find $t$ such that \eqref{t:j} holds, and also
\be \label{to:prove2} \begin{aligned} 
 C \| \Sigma(t) \|_{0} + C t^2 2^{-j q_0} |\s|_{L^\infty(0,t)} \big| \| \Sigma \|_{0} \big|_{L^\infty(0,t)} \leq 1
\end{aligned}
\ee
for $C > 0$ depending only on $d.$ %

  \subsection{Fourth step: final estimates} \label{sec:4}

The observation time is set to 
\be \label{t:final}
 t_\star :=  j \t_\star 2^{j \theta},
 \ee
 with $\t_\star > 0$ depending only on $d,$ to be chosen large enough below. 
 
\begin{lem} \label{lem:P} For $0 \leq t \leq t_\star,$  for all $0 \leq q \leq q_0,$ all $\a, \b,$ there holds  %
 \be \label{est:Sq} \big| \langle \xi \rangle^{q + |\b|} \d_x^\a \d_\xi^\b S_q(t)\big|_{L^\infty} \leq P_j \big( 1 + t \big)^{q + (|\a| + |\b|)/2} \exp\big(-t 2^{-j \theta}\big),\ee
 where $P_j$ is a polynomial in $j,$ of degree $q + (|\a| + |\b|)/2.$ 
\end{lem}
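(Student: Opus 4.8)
\textbf{Proof proposal for Lemma \ref{lem:P}.}

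The plan is to prove the bounds \eqref{est:Sq} by induction on $q$, using the integral representations of the correctors $S_q$ coming from \eqref{def:Sq}, together with the Leibniz rule and the key structural facts that (i) $a_j \geq 2^{-j\theta}$, so the scalar factor $e^{-ta_j}$ decays at rate at least $2^{-j\theta}$; (ii) every $x$- or $\xi$-derivative of $a_j$ is supported in $|\xi|\sim 2^j$ and bounded (with all needed norms $\leq 1$ by \eqref{assume}), but crucially each such derivative falling on $a_j$ is paired, after integration, with a factor of $t$ — this is exactly what produces the $(1+t)^{(|\a|+|\b|)/2}$ growth rather than a worse power; and (iii) the frequency localization $|\xi|\sim 2^j$ lets us trade the weight $\langle\xi\rangle^{q+|\b|}$ freely for powers of $2^j$, which are then absorbed into the polynomial $P_j$ (this is why $P_j$ is allowed to depend on $j$: the $\langle\xi\rangle^{-1}$ gains from $\diamond_{q_1}$, each carrying $|\a'|=q_1$ extra $x$-derivatives, convert into $2^{-jq_1}$ but the bookkeeping is cleanest stated with a $j$-polynomial).

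First I would treat the base case $q=0$, where $S_0=e^{-ta_j}$. Differentiating $e^{-ta_j}$ and organizing by the Faà di Bruno / Leibniz expansion, a term with $k$ derivatives hitting the exponent carries a factor $t^k$ and $k$ factors of derivatives of $a_j$; the worst-case counting of how the $|\a|+|\b|$ total derivatives distribute gives the exponent $(|\a|+|\b|)/2$ on $(1+t)$ — here one uses that a derivative $\d_\xi$ on $a_j$ also loses a power of $\langle\xi\rangle^{-1}\sim 2^{-j}$, which is why the weight $\langle\xi\rangle^{|\b|}$ is affordable. The decay $e^{-t2^{-j\theta}}=e^{-ta_j}\cdot e^{t(a_j-2^{-j\theta})}$... more carefully: since $a_j\ge 2^{-j\theta}$ pointwise, $|e^{-ta_j}|\le e^{-t2^{-j\theta}}$ directly, and each differentiated copy of $e^{-ta_j}$ retains the same pointwise bound, so the decay factor $\exp(-t2^{-j\theta})$ persists through all derivatives. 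For the inductive step, I would use the representation $S_q(t)=-\int_0^t e^{-(t-s)a_j}\sum_{q_1+q_2=q,\,q_1>0} (a_j\diamond_{q_1}S_{q_2})(s)\,ds$, apply $\langle\xi\rangle^{q+|\b|}\d_x^\a\d_\xi^\b$, distribute derivatives by Leibniz among $e^{-(t-s)a_j}$, $a_j$, and $S_{q_2}$, insert the base-case bound on the first factor, the $S^0$ bounds on $a_j$, and the inductive hypothesis on $S_{q_2}$ (noting $\diamond_{q_1}$ contributes $q_1$ derivatives to each of $a_j$ and $S_{q_2}$ and a weight gain $\langle\xi\rangle^{-q_1}$), then integrate in $s$: the $s$-integral of $(1+s)^{q_2+\cdots}\exp(-s2^{-j\theta})$ against $\exp(-(t-s)2^{-j\theta})$ produces $(1+t)^{\,q_2+1+\cdots}\exp(-t2^{-j\theta})$, the extra $+1$ matching the increase from $q_2$ to $q$ and contributing one more degree to $P_j$; all arising $j$-powers are collected into the polynomial.

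The main obstacle I expect is the careful derivative-counting bookkeeping that pins down the exact exponent $q+(|\a|+|\b|)/2$ on both $(1+t)$ and on $\deg P_j$: one must check that whenever a derivative falls on a factor of $a_j$ (rather than on $e^{-(t-s)a_j}$ or on $S_{q_2}$) it is compensated either by a factor of $(1+t)^{1/2}$ (via the half-power accounting) or by a weight gain $2^{-j}$ that is already budgeted, and that the $\diamond_{q_1}$ terms, which add $2q_1$ derivatives total while raising $q$ by $q_1$, balance correctly. A clean way to handle this is to prove a slightly stronger statement by induction in which the roles of $t$-growth and $j$-growth are both tracked by a single two-parameter bound, so that the convolution-in-$s$ estimate closes automatically; once the exponent arithmetic is set up consistently at $q=0$, the inductive step is then a routine, if somewhat lengthy, application of Leibniz and the elementary inequality $\int_0^t(1+s)^p e^{-s\mu}e^{-(t-s)\mu}\,ds\lesssim (1+t)^{p+1}e^{-t\mu}$.
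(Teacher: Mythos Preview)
There is a genuine gap: you have not identified the mechanism that halves the power of $t$. Consider already the base case $q=0$, $\b=0$. The Fa\`a di Bruno expansion of $\d_x^\a e^{-ta_j}$ contains the term $(-t)^{|\a|}\prod_i(\d_{x_i}a_j)^{\a_i}\,e^{-ta_j}$, in which every derivative is of first order. Your point (ii) says each derivative on $a_j$ is ``paired with a factor of $t$'', which is correct but yields $t^{|\a|}$, not $t^{|\a|/2}$; your remark about $\d_\xi$ gaining $\langle\xi\rangle^{-1}\sim 2^{-j}$ is irrelevant here since $\b=0$. Bounding $|\d_x a_j|\le 1$ gives only $t^{|\a|}e^{-t2^{-j\theta}}$, and your induction then closes with exponent $2q+|\a|+|\b|$ (each $\diamond_{q_1}$ adds $2q_1$ derivatives while raising $q$ by $q_1$), not $q+(|\a|+|\b|)/2$. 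The paper notes in Section~\ref{sec:matrix} that this weaker bound is exactly what one obtains for matrix-valued symbols, and it yields only $\theta<1/2$ in Theorem~\ref{th:G}.

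The missing ingredient is the Glaeser-type inequality: since $a_j\ge 0$ with $\|a_j\|_{C^2}\le 1$, one has $|Da_j|\lesssim a_j^{1/2}$ on $\{a_j<h\}$, where $D=(\nabla_x,\langle\xi\rangle\nabla_\xi)$. The paper proves this in its first step and exploits it via a splitting: on $\{a_j\ge 2^{-j\theta}+Ct^{-1}\ln t\}$ the extra decay in $e^{-ta_j}$ absorbs any power of $t$; on the complement, $|Da_j|\lesssim (t^{-1}\ln t)^{1/2}$ for $t\le t_\star$, so each first-order factor costs only $t^{-1/2}(\ln t)^{1/2}$. Combined with the Fa\`a di Bruno constraint $2k\le|\g|+k_0$ (here $k_0$ counts first-order factors), the net power is $k-k_0/2\le|\g|/2$, and the $(\ln t)^{k_0/2}$ becomes, for $t\le t_\star=j\t_\star 2^{j\theta}$, the polynomial $P_j$. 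Your ``half-power accounting'' is invoked but never supplied; this square-root bound is precisely it.
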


\begin{proof} {\it First step.} We claim that on $\{ a_j < h\},$ there holds $|\tilde D a_j| \leq 4 h^{1/2},$ where 
$$\tilde D = (\nabla_x, \langle \xi \rangle \nabla_\xi).$$

 Indeed, let $(x,\xi) \in \{ a_j < h \},$ let $\vec e$ be a given unitary direction in $\R^{d}_x,$ and $\big[x - s_- \vec e, x + s_+ \vec e \, \big] \times \{ \xi \}$ be the line segment of maximal (and temporarily assumed finite) length in $\{ a_j < h\}$ that goes through $(x,\xi)$ and is parallel to $\vec e.$ By maximality of the segment and continuity of $a,$ the function 
 $\tilde a(s) := a_j\left(x + s \vec e, \xi\right)$ cannot be monotonous in $[s_-,s_+].$ In particular, for some $s_0$ there holds $\tilde a'(s_0) = 0.$ If $|s_+ - s_-| \leq 2 h^{1/2},$ then this implies the bound on $|\nabla_x a|,$ since $|\d_x^2 a| \leq 1$ and $\vec e$ is arbitrary. Otherwise, Taylor expansions imply $$ \tilde a'(s) = \frac{\tilde a(s + h^{1/2}) - \tilde a(s - h^{1/2})}{2 h^{1/2}} + h^{1/2} \int_0^1 \tilde a''(s + h^{1/2} \s) - \tilde a''(s - h^{1/2} \s) \, d\s,$$
 and given $s \in \big(s_- + h^{1/2}, s_+ - h^{1/2}\big),$ we may bound $\tilde a(s \pm h^{1/2})$ by $h.$ This implies $|\tilde a'(s)| \leq 3 h^{1/2},$ since $|a''| \leq 1.$ Finally on $(s_-, s_- + h^{1/2}],$ we simply use another first-order Taylor expansion of $\tilde a',$ and the fact that $|\tilde a'(s_-+h^{1/2})| \leq 3 h^{1/2}.$ The same argument applies on $[s_+ - h^{1/2}, s_+).$ If the considered line segment is infinite, a minor variation on the above arguments applies. This proves the bound on $|\nabla_x a|.$ For the bound on $\langle \xi \rangle |\nabla_\xi a|,$ it suffices to consider a line segment parallel to a direction in $\R^d_\xi,$ and use $\langle \xi \rangle^2 |\d_\xi^2 a| \leq 1.$ Here the discussion bears on whether the length of the segment is smaller or greater than $2 h^{1/2} \langle \xi \rangle.$ 

\smallskip

{\it Second step.} By the Fa\'a di Bruno formula, denoting 
\be \label{def:tildeDg} \tilde D^\g = \langle \xi \rangle^{|\b|} \d_x^\a \d_\xi^\b, \qquad \mbox{with $\a + \b = \g,$}\ee
there holds
 \be \label{dke} \tilde D^\g(e^{-ta_j}) =  e^{-t a_j} \sum_{\begin{smallmatrix} 1 \leq k \leq |\g| \\ \a_1 + \dots + \a_{k} = \g \end{smallmatrix}} C_{(\a_1,\dots,\a_k)} t^{k} \prod_{1 \leq \ell \leq k} \tilde D^{\a_\ell} (-a_j),\ee
where $C_{(\a_\ell)}$ are positive constants. Let $0 \leq k_0 \leq k$ such that $|\a_\ell| = 1$ if $\ell \leq k_0.$ Since the other indices $\a_\ell$ all have length greater than two, and since there are $k - k_0$ of them, there holds $|\g| \geq k_0 + 2 (k - k_0).$ We thus obtain
\be \label{dec:faa} \tilde D^\g (e^{-t a_j}) = e^{-t a_j} \sum C_{\star} t^{k} (\tilde D a_j)^{k_0} P_{\star}(\d)(\tilde D^2 a), \qquad 2 k \leq |\g| + k_0, \quad k_0 \leq |\g|,\ee
where $(\tilde D a_j)^{k_0} =  \prod_{1 \leq i,j \leq d} (\d_{x_i} a)^{\g_0^{(i)}}(\langle \xi \rangle \d_{\xi_j} a)^{\g_0^{(j)}},$ for some $\g_0 \in \N^{2d}$ such that $|\g_0| = k_0,$ and $P_{\star}$ is a constant-coefficient polynomial, so that $P_\star(\d) (\tilde D^2 a)$ involves only (weighted) derivatives of $a$ of order at least two. In \eqref{dec:faa}, the sum runs over all possible decompositions of $\g$ as in \eqref{dke}, and the $C_\star$ are positive constants.

\smallskip

{\it Third step.} We now verify by induction that for all $\g,$ all $q \leq q_0,$ 
\be \label{ind:Sq}
\langle \xi \rangle^q \tilde D^\g S_q = e^{-t a_j} \sum C_\star t^{k} (\tilde Da_j)^{k_0} P_\star(\d)(\tilde D^2 a),
\ee
with the same summation convention as in \eqref{dec:faa}, and 
\be \label{ind:Sq2} \max(k_0,k) \leq 2 q + |\g|, \qquad  \dsp{k - k_0/2 \leq q + |\g|/2}.\ee
Recall that in \eqref{ind:Sq}, $\tilde D^\g$ is a weighted derivative (it is defined in \eqref{def:tildeDg}), so that the total weight in the left-hand side of \eqref{ind:Sq} is $\langle \xi \rangle^{q + |\b|},$ with $\g = \a + \b,$ as in \eqref{est:Sq}.  

For $q = 1,$ there holds $S_1 = 0,$ by \eqref{diamond}. For $q = 2,$ $\tilde D^\g S_2$ is a sum of terms of the form $t^2 P_\star(\tilde D^2 a_j) \tilde D^\g e^{-t a_j},$ and of terms of the form $t^3 P_\star(\tilde D^2 a_j) \tilde D^\g ((\tilde D a_j)^2 e^{- t a_j}).$ In both cases, we verify conditions \eqref{ind:Sq}-\eqref{ind:Sq2} directly, using the second step.

Suppose now that \eqref{ind:Sq} holds for all $q' \leq q - 1.$ By definition of $S_q$ in \eqref{def:Sq}, $ \langle \xi \rangle^q \tilde D^\g S_q$ is a sum of terms
$$ %
 \int_0^t \tilde D^{\g_1} \big( e^{(t - t') a_j} \big) \tilde D^{\g_2 + q_1} a_j \langle \xi \rangle^{q_2}\tilde D^{\g_3 + q_1} S_{q_2}(t') \, dt', \quad 0 < q_1, \,\, q_1 + q_2 = q, \,\, |\g_1| + |\g_2| + |\g_3| = |\g|,
$$ 
By \eqref{dec:faa} and the induction hypothesis, up to multiplication by $C_\star P_\star(\d) (\tilde D^2 a_j)$ every term above is a sum of terms of the form 
$e^{-t a_j} t^{1 + k+ k'} (\tilde D a_j)^{k_0 + k'_0} \tilde D^{\g_2 + q_1} a,$
with
 $$2 k \leq |\g_1| + k_0, \quad k_0 \leq |\g_1|, \quad k' \leq 2 q_2 + |\g_3| + q_1, \quad k' - \frac{k'_0}{2} \leq q_2 + \frac{|\g_3|}{2} + \frac{q_1}{2}. $$
From there, we see that \eqref{ind:Sq} holds at rank $q,$ handling the case $|\g_2| + q_1 \leq 1$ separately.

\smallskip

{\it Fourth step.} For $0 \leq t \leq 2,$ the bound \eqref{est:Sq} follows from the previous step. We assume $t \geq 2$ from now on, and use the bound on $S_q$ given by the third step. 

 On $\{ a_j \geq 2^{-j \theta} + C t^{-1} \ln t\},$ bounding derivatives of $a$ by 1 and using \eqref{ind:Sq}-\eqref{ind:Sq2}, we find that there holds $\langle \xi \rangle^q |\tilde D^\g S_q| \leq C_q t^{2 q + |\g| - C} e^{-t 2^{- j \theta}},$ implying \eqref{est:Sq} if $C \geq 2 q + |\g|.$ 
 
 On $\{a_j < 2^{-j \theta} + C t^{-1} \ln t\},$ there holds $|\tilde D a_j| \leq 4 \big(2^{-j \theta} + C t^{-1} \ln t\big)^{1/2},$ by the first step. On $[0, t_\star],$ with the limiting observation time $t_\star$ as defined in \eqref{t:final}, there holds $2^{-j \theta} \leq C t^{-1} \ln t$ if $C$ is large enough (independently of $j$). Hence the bound $|\tilde D a_j| \leq 4 (2C)^{1/2} (t^{-1} \ln t)^{1/2}.$ Thus with \eqref{ind:Sq}-\eqref{ind:Sq2}, we find $\langle \xi \rangle^q |\tilde D^\g S_q| \leq C_q t^{k - k_0/2} (\ln t)^{k_0/2} e^{-t 2^{-j \theta}},$ implying \eqref{est:Sq}, since $(\ln t)^{k_0/2} \lesssim j^{q +|\g|/2}.$  
 \end{proof}

By the Calder\'on-Vaillancourt theorem (bound \eqref{continuity:w} in Section \ref{sec:symbols}), 
$$ %
|S_q(t)^{\rm w}|_{L^2 \to L^2} \lesssim \sup_{\a,\b}  \big| \langle \xi \rangle^{|\b|} \d_x^\a \d_\xi^\b S_q(t) \big|_{L^\infty},
$$ %
 with $|\a|, |\b| \leq [d/2] + 1.$ We now use Lemma \ref{lem:P}. Since the correctors $S_q,$ for $q \geq 1,$ are localized around frequencies $\sim 2^j,$ and since $\max_{t \geq 0} t^k e^{-t 2^{-j \theta}} = C_k 2^{j \theta k},$ we obtain
\be \label{est:Sq-bis} \begin{aligned} \max_{0 \leq t \leq t_\star} | S_q(t)^{\rm w}|_{L^2 \to L^2} & \lesssim P_j 2^{-j q + j \theta(q + C(d))}, \\ | S_q(t_\star)^{\rm w}|_{L^2 \to L^2} & \lesssim P_j 2^{-j q + j \theta(q + C(d))} e^{-\t_\star j}, \\ \max_{0 \leq t \leq t_\star} \| S_q(t) \|_{q_0 + q + C(d)} & \lesssim P_j 2^{j \theta (q_0 + C(d))},
\end{aligned}
\ee
where $P_j$ is a polynomial in $j,$ of degree less than $q_0 + C(d),$ and $t_\star$ is the limiting observation time defined in \eqref{t:final}. Since $\theta < 1,$ we may sum the bounds in \eqref{est:Sq-bis} over $q,$ implying 
$$ %
\begin{aligned}
 \max_{0 \leq t \leq t_\star} | \Sigma(t)^{\rm w}|_{L^2 \to L^2} & \lesssim P_j 2^{j \theta C(d)}, \\ | \Sigma(t_\star)^{\rm w}|_{L^2 \to L^2} & \lesssim P_j 2^{j \theta C(d)} e^{-\t_\star j}, \\ |\s|_{L^\infty(0,t)} & \lesssim P_j 2^{j \theta (q_0 + C(d))}. \end{aligned}
 $$ %
This shows that for $\t_\star$ large enough the bound \eqref{to:prove2} holds at $t = t_\star.$ Indeed, the first term in \eqref{to:prove2} is 
$$ C \| \Sigma(t_\star) \|_0 \leq C' P_j 2^{j \theta C(d)} e^{-\t_\star j} \leq 1/2,$$
if $\t_\star > \theta C(d) \ln 2,$ and if $j$ is large enough, depending on the degree $q_0 + C(d)$ of $P_j.$ And, by choice of $q_0$ in Lemma \ref{lem:Sa}, the second term in \eqref{to:prove2} is 
$$ C t_\star^2 2^{-j q_0} |\s|_{L^\infty(0,t_\star)} \big| \| \Sigma \|_{0} \big|_{L^\infty(0,t)} \leq C' P_j 2^{j \theta(q_0 + C(d)) - j q_0} \leq 1/2.$$
if $j$ is large enough, depending only on $\theta$ and $d.$ This concludes the proof of Theorem \ref{th:G}. %

\subsection{Remarks and extensions} \label{sec:matrix}

It is only in the first step of the proof of Lemma \ref{lem:P} that we use the assumption that $a$ is scalar. There we take advantage of the fact that if $a \in C^2$ is nonnegative, then $|\nabla a| \lesssim |a|^{1/2}$ in a neighborhood of $\{ a = 0\}.$ This implies that the correctors $S_q$ in the approximate solution operator do not grow in time like $t^{2q + C(d)},$ but only like $t^{q + C(d)}.$ Considering that our construction of the order-$q_0$ solution operator is accurate only for $t$ such that $t 2^{-j q_0} \s(t) < 1$ (this is Corollary \ref{cor:duh}) with $\s$ growing in time like $S_{q_0},$ this gives the constraint $2^{-j q_0} t^{q_0 + C(d)} < 1,$ implying for the limiting observation time $t_\star$ the bound $t_\star = O(2^{j \theta}),$ with $\theta < 1.$

Now for matrix-valued symbols, we have no such bound on $|\nabla a|.$ As a consequence, the correctors a priori grow like $t^{2q + C(d)}.$ Our proof thus adapts to matrix-valued symbols, but only if we restrict to $\theta < 1/2,$ corresponding to a gain of (just less than) half a derivative in G\r{a}rding.

Finally, we note that for operators in Weyl quantization, both the reductions to symbols of order zero and the Littlewood-Paley decomposition \eqref{lp} generate errors that are $O(|u|_{H^{-1}}^2).$ Thus the analysis of Section \ref{sec:red3} applies to the Fefferman-Phong inequality (\cite{FP,Bony}; Theorem 2.5.10 in \cite{Le}), a refinement of G\r{a}rding with gain of two derivatives, for scalar symbols:

\begin{prop} \label{prop:FP} In order to prove the Fefferman-Phong inequality
$$ %
 \Re e \, (a^{\rm w} u, u)_{L^2} + C |u|_{H^{(m-2)/2}}^2 \geq 0,
$$ %
known to hold for all scalar $a \in S^m$ with $\Re e \, a \geq 0,$ some $C > 0,$ all $u \in H^m,$ it is sufficient to prove that for all real $a \in S^0$ such that $a \geq 0,$ the following holds: for some $C > 0,$ for $j$ large enough, for all $u \in L^2,$ there holds for some $t > 0$ the bound 
\be \label{for:fp}
 |\Phi(-t) u_j|^2_{L^2} \leq |u_j|^2_{L^2} + C t \b_j, \qquad \mbox{with \,\, $\dsp{\sum_{j \geq j_0} \b_j \leq |u|_{H^{-1}}^2}.$}
\ee
where $\Phi$ is the flow of $2^{-j} + (\phi_j a)^{\rm w},$ $u_j = \psi_j^{\rm w} u,$ and $(\phi_j)$ and $(\psi_j^2)$ are two Littlewood-Paley decompositions such that $(1 - \psi_j^2) \phi_j \equiv 0.$ 
\end{prop}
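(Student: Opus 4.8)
The plan is to mimic, almost verbatim, the reduction scheme already carried out for Theorem~\ref{th:G}, keeping track of the fact that the target gain is now two derivatives rather than $\theta<1$, and checking that every error term generated along the way is summable in $j$ against $|u|_{H^{-1}}^2$. Concretely, starting from a scalar $a\in S^m$ with $\Re e\,a\ge 0$, I would first pass from $\op(a)$ to the Weyl quantization $a^{\rm w}$; for Weyl quantization the discrepancy $\op(a)-a^{\rm w}$ lies in $\op(S^{m-2})$ (not merely $S^{m-1}$), so the associated error is $O(|u|_{H^{(m-2)/2}}^2)$, consistent with the Fefferman--Phong loss. Then $\Re e\,(a^{\rm w}u,u)_{L^2}=((\Re e\,a)^{\rm w}u,u)_{L^2}$ reduces us to real nonnegative $a$. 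Next, conjugating by $\Lambda^{m/2}=\op(\langle\xi\rangle^{m/2})$ as in Section~\ref{sec:red}: the operator $\Lambda^{m/2}a_0^{\rm w}\Lambda^{m/2}$ has principal symbol $a$ and, crucially, vanishing subprincipal symbol in Weyl quantization, so the composition error is $R_{m-2}$ with $(R_{m-2}w,w)_{L^2}\lesssim|w|_{H^{(m-2)/2}}^2$ — again within the allowed loss. This reduces everything to $m=0$.

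The heart of the reduction is the Littlewood--Paley step. I would invoke the same decomposition identity used for \eqref{lp} (Claim 2.5.24 in \cite{Le}), but I must verify the stronger statement that, in \emph{Weyl} quantization, with $(\phi_j)$, $(\psi_j^2)$ chosen so that $(1-\psi_j^2)\phi_j\equiv 0$, the difference
\[
 (a^{\rm w}u,u)_{L^2}-\sum_{j\ge 0}\big((\phi_j a)^{\rm w}\psi_j(D)u,\psi_j(D)u\big)_{L^2}
\]
is $O(|u|_{H^{-1}}^2)$. The mechanism is that cross terms $\psi_j(D)(\phi_k a)^{\rm w}\psi_\ell(D)$ with $|k-\max(j,\ell)|$ large are negligible to infinite order by almost-orthogonality of the dyadic pieces, and the remaining near-diagonal commutator errors gain derivatives because passing a smooth frequency cutoff through a symbol in Weyl quantization produces symbols in $S^{m-1}$ at each commutator step; the excerpt already asserts (end of Section~\ref{sec:matrix}) that for Weyl quantization ``both the reductions to symbols of order zero and the Littlewood-Paley decomposition \eqref{lp} generate errors that are $O(|u|_{H^{-1}}^2)$'', so I would cite that observation. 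After this, low-frequency terms $0\le j\le j_0$ are absorbed by $L^2$-continuity of the $(\phi_j a)^{\rm w}$ into an $O(|u|_{H^{-1}}^2)$ term, and we also add the regularizing shift $2^{-j}$ to each $a_j$ — note the exponent is now $2^{-j}$, not $2^{-j\theta}$, because Fefferman--Phong gains two derivatives — which costs only another $\sum_j 2^{-j}|\psi_j(D)u|_{L^2}^2\lesssim|u|_{H^{-1}}^2$.

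At that point the problem has been reduced to showing: for real $a\in S^0$ with $a\ge 0$, for $j$ large, for all $u\in L^2$, and for $u_j=\psi_j(D)u$, one has $(a_j^{\rm w}u_j,u_j)_{L^2}\ge -C\b_j/(2t)$ with $\sum_{j\ge j_0}\b_j\lesssim|u|_{H^{-1}}^2$ for a suitable $t=t(j)$; and here the reformulation of Section~\ref{sec:red3} applies essentially unchanged — identities \eqref{flow2}--\eqref{flow5} only use that $a_j^{\rm w}$ is bounded and real, so writing $u_j=\Phi(t)\Phi(-t)u_j$ we get $|u_j|_{L^2}^2-|\Phi(-t)u_j|_{L^2}^2\le 2t\,\Re e\,(a_j^{\rm w}u_j,u_j)_{L^2}$, which is exactly \eqref{for:fp} with $\b_j$ tracking the backward-flow defect. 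So the statement of Proposition~\ref{prop:FP} is precisely the residual content after all these reductions, and the proof consists of assembling the four reductions above and checking the bookkeeping. The main obstacle I expect is not any single estimate but the careful verification that the Littlewood--Paley remainder in \eqref{lp}, transported to Weyl quantization with the $(1-\psi_j^2)\phi_j\equiv 0$ normalization, genuinely sums to $O(|u|_{H^{-1}}^2)$ uniformly — this is the one place where the gain of a full derivative (rather than $\theta$) per commutator, together with almost-orthogonality across dyadic blocks, must be used quantitatively; everything else is a transcription of the arguments already in Sections~\ref{sec:red}--\ref{sec:red3}.
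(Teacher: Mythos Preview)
Your proposal is correct and follows essentially the same approach as the paper, which says only: ``it suffices to follow the reduction steps of Section~\ref{sec:red} and reproduce the analysis of Section~\ref{sec:red3}.'' One minor correction: your first step (passing from $\op(a)$ to $a^{\rm w}$) is unnecessary, since the Fefferman--Phong inequality in Proposition~\ref{prop:FP} is already stated for the Weyl quantization, and your claim that $\op(a)-a^{\rm w}\in\op(S^{m-2})$ is false as an operator identity (the difference has symbol of order $m-1$; only after taking real parts with $a$ real does the subprincipal term drop out).
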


For a proof of Proposition \ref{prop:FP}, it suffices to follow the reductions steps of Section \ref{sec:red} and reproduce the analysis of Section \ref{sec:red3}. A strong point in Proposition \ref{prop:FP} is that in \eqref{for:fp}, the time $t$ is allowed to be dependent of $j$ and $u.$ Our analysis of Sections \ref{sec:approx} and \ref{sec:4} falls however short of proving \eqref{for:fp}; it shows that for bound \eqref{for:fp} to hold at time $t = O(j 2^{j}),$ it would be sufficient to prove bounds in $O(t^{q/2})$ for the correctors $S_q.$

 \section{Appendix: symbols and operators} \label{sec:symbols}

For $m \in \R,$ the class $S^m = S^m_{1,0}$ of classical symbols is the set of all $a: \R^d \times \R^d \to \C^{n \times n }$ such that, for all $\a,\b \in \N^d,$ for some $C_{\a\b} > 0,$ for all $(x,\xi) \in \R^{2d},$ 
$$ |\d_x^\a \d_\xi^\b a(x,\xi)| \leq C_{\a\b} (1 + |\xi|^2)^{(m-|\b|)/2}.$$ %
Given a symbol $a \in S^m,$ we denote $\| a \|_r$ the norm (the order $m$ is implicit)
 \be \label{symb:norms} \| a \|_r := \sup_{|\a| + |\b| \leq r + 2([d/2] + 1)} \sup_{(x,\xi) \in \R^{2d}} \,(1 + |\xi|^2)^{(|\b| - m)/2} |\d_x^\a \d_\xi^\b a(x,\xi)|.\ee
The associated operators are, in semiclassical quantization
$$ %
 (\op_\e(a) u)(x) = \int_{\R^d} e^{i x \cdot \xi} a(x, \e \xi) \hat u(\xi) \, d\xi, \qquad \e > 0,
$$  %
and in Weyl quantization 
  \be \label{def:weyl}
  (a^{\rm w} u)(x) = \int_{\R^d \times \R^d} e^{i (x - y) \cdot \xi} a\Big( \frac{x+y}{2}, \xi\Big) u(y) \, d\xi \, dy.
  \ee
 When $\e =1,$ we denote $\op_1(a) = \op(a).$ The semiclassical Sobolev norms $\| \cdot \|_{\e,s}$ are%
 \be \label{semicl} \| u \|_{\e,s} = |(1 + |\e\xi|^2)^{s/2} \hat u|_{L^2(\R^d_\xi)}.\ee
When $\e = 1,$ the norm $\| \cdot \|_{1,s}$ is the classical $H^s$ norm. %
The Calder\'on-Vaillancourt theorem (see for instance \cite{CV,Hw}) asserts that if $a$ belongs to $S^m,$ then $\op_\e(a)$ extends to a linear bounded operator $H^m \to L^2,$ with norm controlled by $\|a \|_0:$  
 \be \label{action}
  |\op_\e(a) u|_{L^2} \lesssim \| a \|_0 \| u \|_{\e,m}, \qquad \mbox{for all $a \in S^m,$ all $u \in H^m,$}%
  \ee
the implicit constant depending only on $d.$ The same holds true in Weyl quantization (see for instance \cite{Bou}, Theorem 1.2):
\be \label{continuity:w}
 | a^{\rm w} u|_{L^2} \lesssim \| a \|_0 |u|_{H^m}, \qquad \mbox{for all $a \in S^m,$ all $u \in H^m.$}
 \ee
Stability by composition is expressed by the equality 
 \be \label{compo:e}
 \op_\e(a_1) \op_\e(a_2) = \sum_{0 \leq q \leq q_0} \e^q \op_\e(a_1 \sharp_q a_2) + \e^{q_0+1} \op_\e(R_{q_0+1}(a_1,a_2)),
 \ee
 where
 $$ a_1 \sharp_q a_2 = \sum_{|\a| = q} \frac{(-i)^{|\a|}}{\a!} \d_\xi^\a a_1 \d_x^\a a_2,$$
 and $R_{q_0+1}(a_1,a_2) \in S^{m_1 + m_2 - (q_0+1)}$ satisfies 
 \be \label{composition:e}
 \| \op_\e(R_{q_0+1}(a_1,a_2)) \|_r \lesssim \| a_1 \|_{q_0 + C(d)} \| a_2 \|_{q_0 + C(d)},
 \ee
 with $C(d) > 0$ depending only on $d.$ A composition result in classical quantization is given in Theorems 1.1.5 and 1.1.20, and Lemma 4.1.2 and Remark 4.1.4 of \cite{Le}. From there \eqref{compo:e}-\eqref{composition:e} is easily deduced by introduction of the dilations $(h_\e)$ such that $(h_\e u)(x) = \e^{d/2} u(\e x),$ and the observation that $|h_\e u|_{H^s} = \| u \|_{\e,s}$ and $\op_\e(a) = h_\e^{-1} \op(\tilde a) h_\e,$ with $\tilde a(x,\xi) := a(\e x, \xi).$

  Specializing to $a_1 = (1 + |\xi|^2)^{s/2},$ the composition result and the $H^m \to L^2$ continuity result give continuity of $\op_\e(a)$ as an operator from $H^{s+m}$ to $H^s:$
 \be \label{continuite:s}
  \| \op_\e(a) u \|_{\e,s} \lesssim \big( \| a \|_0 + \e \| a \|_{C(d)}\big) \| u \|_{\e,s+m}.
 \ee 

In Weyl quantization, there holds (see for instance Section 2.1.5 in \cite{Le})
\be \label{compo:w} 
 a_1^{\rm w} a_2^{\rm w} = \sum_{0 \leq k \leq q_0} ( a_1 \diamond_k a_2)^{\rm w} + R_{q_0+1}^{\rm w}(a_1,a_2),
 \ee
 where%
 \be \label{diamond}
 a_1 \diamond_k a_2 := \sum_{|\a| + |\b| = k} \frac{(-1)^{|\a|} (-i)^{|\a| + |\b|}}{\a!\b!} \d_x^\a \d_\xi^\b  a_1 \d_x^\b  \d_\xi^\a a_2,
 \ee
and, for all $u \in H^{m_1 + m_2 - q_0 - 1}:$ 
 \be \label{composition:w}
  | R_{q_0+1}^{\rm w}(a_1,a_2) u |_{L^2} \lesssim\| a_1 \|_{q_0  + C(d)}\| a_2 \|_{q_0  + C(d)}  |u|_{H^{m_1 + m_2 - q_0  -1}}.
  \ee

From \eqref{continuity:w} and \eqref{compo:w}-\eqref{composition:w} we deduce
\be \label{sobolev:w}
 | a^{\rm w} u|_{H^s} \lesssim \| a \|_{C(d)} |u|_{H^{s+m}}, \qquad a \in S^m, \,\, u \in H^{s+m}, \,\, s, m \in \R,
\ee
often used with $s = -m/2,$ in which case $a^{\rm w}$ appears a continuous $H^{m/2} \to H^{-m/2}$ operator.

Finally, in Section \ref{sec:insta}, we use the pointwise bound%
\be \label{sup:pw}
  |\op_\e(a) u|_{L^\infty} \lesssim \| a \|_{C(d)} |u|_{L^\infty} \Big(1 + |\ln \e| + \ln\Big(\frac{\|u\|_{\e,d/2 + m + \eta}}{|u|_{L^\infty}}\Big)\Big),
 \ee 
where $a \in S^m,$ $C(d) > 0$ depends only on $d,$ $\eta > 0$ is arbitrary, $\e \in (0,1),$ $u \in H^{d/2 + m + \eta}.$ The implicit constant in \eqref{sup:pw} depends only on $d$ and $\eta.$ Bound \eqref{sup:pw} is easily derived from estimate (B.1.1) in Appendix B of \cite{Tay} by introduction of dilations and weighted norms, as mentioned above for the composition result. 

{\footnotesize }

\end{document}